\documentclass[a4paper,11pt]{article}

\usepackage[left=2.7cm,right=2.7cm,top=3.5cm,bottom=3cm]{geometry}
\usepackage{amsmath,amssymb,amsthm,amscd,amsfonts}
\usepackage{latexsym}
\usepackage{graphicx}
\usepackage[all]{xy}
\usepackage{color}
\usepackage[T1]{fontenc}
\usepackage{authblk}

\usepackage{centernot}
\usepackage{mathtools}
\usepackage{ stmaryrd }

\newcommand\ddfrac[2]{\frac{\displaystyle #1}{\displaystyle #2}}

\newcommand{\ZZ}{{\mathbb Z}}
\newcommand{\Z}{{\mathbb Z}}
\newcommand{\Q}{\mathbb Q}

\newcommand{\E}{{\mathcal{E}}}
\newcommand{\QQ}{\mathbb Q}
\newcommand{\mcF}{{\mathcal{F}}}
\newcommand{\A}{\textrm{A}}
\newcommand{\G}{\mathcal{G}}
\newcommand{\loc}{\textrm{loc}}

\newtheorem{thm}{Theorem}[section]

\newtheorem{prop}[thm]{Proposition}
\newtheorem{Problem}[thm]{Problem}

\theoremstyle{definition}
\newtheorem{definition}[thm]{Definition}
\newtheorem{remark}[thm]{Remark}

\newcommand{\z}{{\zeta}}
\newcommand{\ov}{\overline}

\newcommand{\Or}{\mathcal{O}}
\newcommand{\X}{\mathcal{X}}
\newcommand{\Hil}{\mathcal{H}}
\newcommand{\Ab}{\mathcal{A}}

\newcommand{\Bi}{\mathfrak{B}}

\DeclareMathOperator{\Gal}{\rm Gal}
\DeclareMathOperator{\Id}{\rm Id}
\DeclareMathOperator{\GL}{{\rm GL}}
\DeclareMathOperator{\SL}{{\rm SL}}

\renewcommand{\leq}{\leqslant}
\renewcommand{\geq}{\geqslant}

\title{On 5-torsion of CM elliptic curves}


\author[1]{Laura Paladino}
\affil[1]{Dipartimento di Matematica, Universit\`a della Calabria\\
Ponte Pietro Bucci, Cubo 30B - 87036 Arcavacata di Rende (CS), Italy, e-mail: paladino@mat.unical.it}

\date{ }


\begin{document}

\maketitle

Keywords: elliptic curves; complex multiplication; torsion points;

\bigskip

Mathematics subject classification: 11G05; 11F80; 11G18

\begin{abstract}
Let $\E$ be an elliptic curve defined over a number field $K$.
Let $m$ be a positive integer.
We denote by $\E[m]$ the $m$-torsion subgroup of $\E$ and
by $K_m:=K(\E[m])$ the number field obtained by adding to $K$ the coordinates
of the points of $\E[m]$. We describe the fields $K_5$, when
$\E$ is a CM elliptic curve defined over $K$, with Weiestrass form
either $y^2=x^3+bx$ or $y^2=x^3+c$. In particular we classify the fields $K_5$ in terms of generators,
degrees and Galois groups. Furthermore we show some applications of those results to the Local-Global Divisibility
Problem, to modular curves and to Shimura curves. 
\end{abstract}

\section{Introduction} \label{sec1}
Let $\E$ be an elliptic curve defined over a number field $K$ with algebraic closure $\bar{K}$. Let $m$ be a positive integer.
We denote by $\E[m]$ the $m$-torsion subgroup of $\E$ and by $K_m:=K(\E[m])$
the number field generated by the $5$-torsion points of $\E$, i.e. the field obtained by adding to $K$ the coordinates
of the points of $\E[m]$. Since $K_m$ is the splitting field of the $m$-division polynomials,
then $K_m/K$ is a Galois extension, whose Galois group we denote by $G$. For every point $P\in \E$, we indicate by $x(P)$, $y(P)$ its coordinates.
Furthermore, for every positive integer $n$, we indicate the $n$-th multiple of $P$ simply by $nP$. It is well-known that
$\E[m]\simeq (\Z/m\Z)^2$. Let $\{P_1\,,P_2\}$ be a $\Z$-basis for $\E[m]$; thus $K_m=K(x(P_1),x(P_2),y(P_1),y(P_2))$.
To ease notation, we put $x_i:=x(P_i)$ and $y_i:=y(P_i)$ ($i=1,2$).
Knowing explicit generators for $K_m$ could have a lot of interesting applications, for instance about 
Galois representations, local-global problems on elliptic curves (see \cite{Pal}, \cite{Pal2} and \cite{Pal4}),
descent problems (see for example \cite{SS} and the particular cases \cite{Ba} and \cite{Ba2}),
points on modular curves (see \cite{BP}, \cite{BP2})  and points on Shimura curves (see Subsection \ref{sec_shimura}). Anyway there are not many papers about
the argument (see also \cite{Ad}, \cite{Mer} and \cite{BZ}).  A recent and very interesting paper about number fields $\QQ(\E[m])$ is \cite{GJ}. The discussion
there is restricted to the case when $\QQ(\E[m])/\QQ$ is an abelian extension, even in the case of CM elliptic curves.
Among other results (see also Remark \ref{rem1}), in particular the authors prove that if $\E$ is an elliptic curve with complex multiplication and 
$\QQ(\E[m])/\QQ$ is abelian, then $m\in \{2,3,4\}$. 
 In this paper we will describe all possible extensions (even not abelian) $K(\E[5])/K$, for every $K$, when $\E$ is a CM elliptic curve.
We will classify them in terms of generators, degree and Galois groups.
By Artin's primitive element theorem, we know that the extension $K_m/K$ is monogeneous and one can find a single generator for $K_m/K$ by combining the above coordinates.
Anyway, in general it is not easy to find this single generator. So, during the last few years we have
searched for systems of generators easier to be found and to be used in applications.
For every $m$, by the properties of the Weil pairing $e_m$, we have that the image $z_m:=e_m(P_1,P_2)\in K_m$ is a primitive
$m$-th root of unity  and that $K(\z_m)\subseteq K_m$ (see for instance \cite{Sil}). When $m$ is odd, another generating
set for $K_m$ is showed in the following statement (see \cite{BP2}).

\begin{thm} \label{bp2,thm1}
In the notation as above, we have
\begin{equation} \label{genset} K_m=(x_1,\zeta_m,y_2), \end{equation}
for all odd integers $m$.
\end{thm}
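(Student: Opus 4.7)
The plan is as follows. The inclusion $K(x_1,\zeta_m,y_2)\subseteq K_m$ is immediate since $x_1,y_2\in K_m$ and $\zeta_m=e_m(P_1,P_2)\in K_m$. For the reverse inclusion, set $L:=K(x_1,\zeta_m,y_2)$; it suffices to show that the subgroup $H:=\Gal(K_m/L)$ of $\Gal(K_m/K)$ is trivial. Using the basis $\{P_1,P_2\}$ I identify $\Gal(K_m/K)$ with its image in $\GL_2(\Z/m\Z)$, writing each $\sigma$ as a matrix $M_\sigma=\left(\begin{smallmatrix}a&b\\c&d\end{smallmatrix}\right)$ with $\sigma(P_1)=aP_1+cP_2$ and $\sigma(P_2)=bP_1+dP_2$, and determine which matrices can belong to $H$.

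The first step is to extract the shape of elements of $H$ from the conditions $\sigma(x_1)=x_1$ and $\sigma(\zeta_m)=\zeta_m$. The former forces $\sigma(P_1)=\pm P_1$, hence $c=0$ and $a\in\{\pm1\}$. The Galois-equivariance and bilinearity of the Weil pairing give $\sigma(\zeta_m)=e_m(\sigma P_1,\sigma P_2)=\zeta_m^{\det M_\sigma}$, so $\sigma(\zeta_m)=\zeta_m$ forces $\det M_\sigma\equiv 1\pmod m$; combined with $c=0$ and $a=\pm1$ this yields $d=a$. Thus every $\sigma\in H$ has the form
\[
M_\sigma=\begin{pmatrix}\epsilon&b\\0&\epsilon\end{pmatrix},\qquad \epsilon\in\{\pm1\},\ b\in\Z/m\Z.
\]

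The second step is to exploit $\sigma(y_2)=y_2$, which translates to $y(bP_1+\epsilon P_2)=y_2$. The geometric input is that the fibre of the $y$-coordinate map on $\E$ has at most three points, and whenever three distinct such points $Q_1,Q_2,Q_3$ share a $y$-coordinate they are collinear and satisfy $Q_1+Q_2+Q_3=O$. The sub-case $(\epsilon,b)=(-1,0)$ is easy: $\sigma(P_2)=-P_2$ would yield $y(-P_2)=-y_2=y_2$, but since $m$ is odd and $P_2$ has order $m\geq 3$ one has $P_2\neq-P_2$, hence $y_2\neq 0$, a contradiction. For $b\neq0$ I would iterate $\sigma$: the orbit $\sigma^k(P_2)=kbP_1+\epsilon^k P_2$ consists of points of $\E[m]$ that all share the $y$-coordinate $y_2$ and is therefore of cardinality at most three; bounding its size below by the additive order of $b$ in $\Z/m\Z$, together with the basis property of $\{P_1,P_2\}$, then forces $b=0$ and $\epsilon=1$ once the orbit has more than three members.

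The main obstacle will be the small-modulus regime, in particular $m=3$, where the orbit cap of three coincides with the largest possible orbit length and the crude counting argument no longer closes. There I would exploit the precise collinearity relation $P_2+\sigma(P_2)+\sigma^2(P_2)=O$ (automatic in this regime), together with the compatibility of $\sigma$ with the Weil pairing and the requirement that the third point on the horizontal line $y=y_2$ also lie in $\E[m]$, to eliminate the remaining non-trivial matrices and conclude $H=\{\Id\}$, i.e., $K_m=K(x_1,\zeta_m,y_2)$.
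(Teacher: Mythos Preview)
The paper does not prove this theorem at all: it is quoted from \cite{BP2}, so there is no in-paper argument to compare your attempt against. I will therefore assess your proposal on its own merits.

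For odd $m\geq 5$ your strategy is sound and can be completed. From $\sigma(x_1)=x_1$ and $\sigma(\zeta_m)=\zeta_m$ you correctly obtain $M_\sigma=\left(\begin{smallmatrix}\epsilon&b\\0&\epsilon\end{smallmatrix}\right)$ with $\epsilon=\pm1$. The case $(\epsilon,b)=(-1,0)$ is disposed of as you say. For $\epsilon=-1$, $b\neq0$, your displayed orbit formula is wrong (one gets $\sigma^2(P_2)=-2bP_1+P_2$, not $2bP_1+P_2$), but the three points $P_2,\ bP_1-P_2,\ -2bP_1+P_2$ are pairwise distinct and share the ordinate $y_2$; since three distinct points on a horizontal line sum to $O$ while here the sum is $-bP_1+P_2\neq O$, this case dies. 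For $\epsilon=1$, $b\neq0$, the orbit length equals the additive order $r$ of $b$ in $\Z/m\Z$; $r\geq4$ is impossible (too many points on $y=y_2$), $r=2$ is impossible ($m$ odd), and $r=3$ forces the three collinear orbit points to sum to $O$, i.e.\ $3P_2=O$, contradicting $m\geq5$. So $H=\{\Id\}$ for odd $m\geq5$.

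The genuine gap is at $m=3$. Your closing paragraph promises to ``eliminate the remaining non-trivial matrices'' via collinearity and Weil-pairing compatibility, but this cannot succeed for an \emph{arbitrary} basis, because the statement as written is false there. Take $\E:y^2=x^3+1$ over $K=\QQ$, $P_1=(0,1)$ and $P_2=(-\sqrt[3]{4},\sqrt{-3})$; this is a $\Z$-basis of $\E[3]$. Then $K(x_1,\zeta_3,y_2)=\QQ(0,\zeta_3,\sqrt{-3})=\QQ(\zeta_3)$, while $K_3=\QQ(\zeta_3,\sqrt[3]{4})$ has degree $6$. Here the matrix $\left(\begin{smallmatrix}1&1\\0&1\end{smallmatrix}\right)$ genuinely lies in $H$: the three points $P_2,\,P_1+P_2,\,2P_1+P_2$ are exactly the three intersections of $y=\sqrt{-3}$ with $\E$ and do sum to $O$, so none of your proposed constraints excludes it. Thus for $m=3$ the result requires a hypothesis on the choice of basis (as is implicit in \cite{BP2}); without it, no argument can close the case, and your sketch should flag this rather than claim it can be finished.
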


\noindent Of course, in general it is easier to work with the generating set as
in \eqref{genset}. Furthermore, that generating set is often minimal among the
subsets of $\{x_1,x_2,\z_m,y_1,y_2 \}$ (for further details
see \cite{BP2}). For $m=3$ and $m=4$ there are explicit descriptions of all
possible number fields $K_3$ and $K_4$, in terms of generators, degrees and
Galois groups (see in particular \cite{BP2} and also \cite{BP}).
 Here we give a similar classification of every possible number fields $K_5$, for all elliptic curves with complex multiplication, belonging to the families:


$$ \hspace{1cm}  \mcF_1 : y^2=x^3+bx, \hspace{0.3cm}  \textrm{ with } b\in K  \hspace{1cm} \textrm{and} \hspace{1cm}   y^2=x^3+c, \hspace{0.3cm} \textrm{ with } c\in K.$$ \normalcolor

\bigskip \noindent 
We will treat separately the case of the family $\mcF_1: y^2=x^3+bx,$ and of the family  $\mcF_2:  y^2=x^3+c$, with $c\in K$.
 In the very last part of the paper, we show some applications (of those results) to the Local-Global Divisibility Problem, to $K$-rational CM points of modular curves
 and to $K$-rational CM points of Shimura curves. 









\section{Generators of $K(\E[5])$ for elliptic curves $y^2=x^3+bx$}  \label{sec_gen_1}
If ${\mathcal{E}}$ is an elliptic curve defined over $K$, with Weierstrass form $y^2=x^3+bx+c$,
then the abscissas of the points of order 5 of ${\mathcal{E}}$ are the roots of the polynomial

\vspace{0.7cm}
\noindent \hspace{0.3cm} $p_5(x):=-5x^{12}-62bx^{10}-380cx^9+105b^2x^8-240bcx^7+(240c^2+300b^3)x^6+696b^2cx^5+$

\bigskip\noindent \hspace{0.9cm} $(1920bc^2+125b^4)x^4+(1600c^3+80b^3c)x^3+(240b^2c^2+50b^5)x^2+(640bc^3+100b^4c)x+$

\bigskip\noindent \hspace{0.9cm} $256c^4+32b^3c^2-b^6.$

\bigskip\noindent
If $\E_1: y^2=x^3+c$ is an elliptic curve of the family  $\mcF_1$, then the abscissas of the points of order 5 of ${\mathcal{E}}$ are the roots of the polynomial

 $$q_5(x):=-5x^{12}-62bx^{10}+105b^2x^8+300b^3x^6+125b^4x^4+50b^5x^2-b^6.$$

\vspace{0.5cm}
 A factorization of $q_5(x)$ over $K(\zeta_5)$ is 

  $$q_5(x)= -5\cdot (x^4+(- 8\z_5^3  - 8\z_5^2  + 2)b x^2  + (- 8\z_5^3  - 8\z_5^2  + 5)b )\cdot (x^4 + \frac{2}{5} b x^2 + \frac{1}{5} b^2 )$$

\bigskip\hspace{1.2cm} $\cdot (x^4 + (8\z_5^3  + 8\z_5^2  + 10)b x^2  + (8\z_5^3  + 8\z_5^2  + 13)b )$

\bigskip\noindent and a factorization of $q_5(x)$ over $K(i,\zeta_5)$ is

  $$q_{5}(x)= -5 \cdot (x^2 + ((- 4i + 4)\z_5^3  + 4\z_5^2  - 4i \z_5 - 2i + 5)b) \cdot (x^2 + (-4\z_5^3  +(-4i -4)\z_5^2  - 4i \z_5 - 2i + 1)b)$$

 \bigskip\hspace{0.5cm} $\cdot (x^2 + (( 4i + 4)\z_5^3  + 4\z_5^2  + 4i \z_5 + 2i + 5)b)\cdot (x^2 + (-4\z_5^3  +(4i -4)\z_5^2  + 4i \z_5 + 2i + 1)b)$

\bigskip\hspace{0.5cm} $\cdot (x^2+$ {\Large $\frac{-2i+1}{5}$} $b  )\cdot (x^2+$ {\Large $\frac{2i+1}{5}$ } $b),$

\noindent where as usual we denote by $i$ a root of $x^2+1=0$.

\bigskip\noindent
\begin{remark} \label{rem2} Let $\phi_1$ denote the
complex multiplication of $\E_2$, i. e. $\phi_1(x,y)=(-x,iy)$. As above, in many cases if $P$ is a
nontrivial $m$-torsion point, then $\phi_1(P)$ is an $m$-torsion point that is not a multiple of $P$.
In this case a basis for $\E[m]$ is given by $\{P,\phi_1(P)\}$. 
Anyway, in a few special cases the point $ \phi_1(P)$ is a multiple of $P$. For example,
let $\omega_1:=- (1+2i)/5$, let $x_{1/2}=\pm \sqrt{\omega_1}$ and let $P_1$ and $P_2$ be the two 5-torsion points
of $\E_1$, with abscissas respectively $x_1$ and $x_2$. Since $\phi_1(P_i)=2P_i$ (for $i=1,2$), then
$\{P_i,\phi_1(P_i)\}$ is not a basis of $\E[5]$. We would have not this problem by choosing a
root of $q_5(x)$, different from $x_1$ and $x_2$.
\end{remark}

\begin{thm} \label{gen2} Let

$$\theta_1:=- ((- 4i + 4)z_5^3  + 4z_5^2  - 4i z_5 - 2i + 5).$$
\bigskip

\noindent Then $K_5=K( \zeta_5, i,\sqrt{(\theta_1+1)b\sqrt{\theta_1b}})$. \normalcolor
\end{thm}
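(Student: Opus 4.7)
The plan is to read off an explicit 5-torsion point from the factorization of $q_5(x)$ over $K(\zeta_5,i)$ and then apply Theorem \ref{bp2,thm1} after using the complex multiplication $\phi_1$ to produce a second basis vector.

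More precisely, first I note that the choice of $\theta_1$ is set up so that the quadratic factor
$$x^2+\bigl((-4i+4)\zeta_5^3+4\zeta_5^2-4i\zeta_5-2i+5\bigr)b = x^2-\theta_1 b$$
of $q_5(x)$ (visible in the second factorization displayed above) has as its roots the abscissas of two 5-torsion points. I set $x_1:=\sqrt{\theta_1 b}$, pick the 5-torsion point $P_1=(x_1,y_1)$ with this abscissa, and observe that $\theta_1$ differs from the special values $\omega_1=-(1+2i)/5$ and its conjugate appearing in the two factors $x^2+\frac{\pm 2i+1}{5}b$ flagged in Remark \ref{rem2}; hence by that remark $\phi_1(P_1)=(-x_1,iy_1)$ is not a multiple of $P_1$, so $\{P_1,\phi_1(P_1)\}$ is a $\Z$-basis of $\E[5]$.

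Next I compute $y_1$ explicitly from the Weierstrass equation:
$$y_1^2 = x_1^3+bx_1 = x_1(x_1^2+b)=(\theta_1+1)b\,x_1=(\theta_1+1)b\sqrt{\theta_1 b},$$
so $y_1=\sqrt{(\theta_1+1)b\sqrt{\theta_1 b}}$. Applying Theorem \ref{bp2,thm1} to the basis $\{P_1,\phi_1(P_1)\}$ gives
$$K_5=K(x_1,\zeta_5,y_2)=K(x_1,\zeta_5,iy_1)=K(\zeta_5,i,x_1,y_1),$$
where I have used that $y_2=iy_1$ automatically brings $i$ into the field. Since $\theta_1\in K(\zeta_5,i)$ by its very definition and $x_1=y_1^2/\bigl((\theta_1+1)b\bigr)$, the generator $x_1$ is redundant once $y_1$ is adjoined, and we conclude
$$K_5=K\!\left(\zeta_5,\,i,\,\sqrt{(\theta_1+1)b\sqrt{\theta_1 b}}\,\right).$$

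The routine but slightly delicate step will be verifying the factorization of $q_5(x)$ over $K(\zeta_5,i)$ (which I take as given, having been displayed), together with confirming that the chosen $\theta_1 b$ is not one of the exceptional values for which $\phi_1(P)$ fails to produce a basis. The conceptual core — that the generating set of Theorem \ref{bp2,thm1} collapses, thanks to CM, to a single iterated radical together with $\zeta_5$ and $i$ — is short; the real content is choosing the right root of $q_5$ (namely one of the roots of $x^2-\theta_1 b$ rather than one of the roots of the two ``bad'' quadratics in the factorization) so that $y_2=iy_1$ forces $i\in K_5$ and $\{P_1,\phi_1(P_1)\}$ is genuinely a basis.
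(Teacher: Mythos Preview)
Your argument is correct and follows essentially the same route as the paper's: pick the abscissa $x_1=\sqrt{\theta_1 b}$ from the displayed factorization, check that $\{P_1,\phi_1(P_1)\}$ is a basis of $\E[5]$, and then observe that $x_1$ is recoverable from $y_1$ over $K(\zeta_5,i)$ so that the single radical $y_1=\sqrt{(\theta_1+1)b\sqrt{\theta_1 b}}$ suffices. The only differences are cosmetic: you pass through Theorem~\ref{bp2,thm1} and recover $i$ from the pair $y_1,\,iy_1\in K_5$, whereas the paper argues directly from the basis $\{P_1,\phi_1(P_1)\}$ that $K_5=K(x_1,y_1,i)$ and then drops $x_1$; and you justify the basis claim by noting $\theta_1\neq\omega_1,\omega_2$ via Remark~\ref{rem2}, whereas the paper appeals to a direct computation of the multiples of $P_1$.
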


\begin{proof}
 If $x_1:=\sqrt{\theta_1 b}$, then by the factorization of $q_5(x)$ showed above,
we have that $x_1$ is the abscissas of a $5$-torsion point of $\E$. Let $P_1=(x_1,y_1)$,
where $y_1=\sqrt{(\theta_1+1)b\sqrt{\theta_1b}}$. By calculating
$\phi_1(P_1)$ and the powers of $P_1$, one sees that
$\phi_1(P_1)$ is not a multiple of $P_1$. 
In addition observe that
$\sqrt{\theta_1 b}\in K( \zeta_5, i,\sqrt{(\theta_1+1)b\sqrt{\theta_1b}})$.
Then the conclusion follows by Remark \ref{rem2}.
\end{proof}

Observe that  $[K_5:K]\leq 2\cdot 4\cdot 2\cdot 2=32$, for every $b\in K$.
This is in accordance with the fact that $\E$ has complex multiplication
$(x,y)\mapsto (-x,-iy)$ and then the Galois representation
\[ \rho_{\E,5}:\Gal(\ov{K}/K) \rightarrow  \GL_2(\Z/5\Z) \]
is not surjective.





\section{Degrees $[K_5:K]$ for the curves of $\mcF_1$} \label{sub3}

\begin{thm}
Let $\E: y^2=x^3+bx$, with $b\in K$. Let $\theta_1$ as above. Consider the conditions

\bigskip

\begin{tabular}{lll}
& {\bf \A.} \hspace{0.3cm} $i\notin K$; \hspace{1.5cm}  & {\bf C.} \hspace{0.3cm} $\sqrt{(\theta_1)b}\notin K(i,\z_5)$; \\
& {\bf B1.} \hspace{0.3cm} $\z_5+\z_5^{-1}\notin K$;  & {\bf D.} \hspace{0.3cm} $\sqrt{(\theta_1+1)b\sqrt{\theta_1b}}\notin K(i,\z_5,\sqrt{\theta_1b})$. \\
& {\bf B2.} \hspace{0.3cm} $\z_5\notin K(\z_5+\z_5^{-1})$;  \hspace{1.5cm} &\\
\end{tabular}

\bigskip

The possible degrees of the extension $K_5/K$ are the following

\begin{center}
\begin{tabular}{|c|c|c|c|}
\hline
 $d$ & {\em holding conditions}  &  $d$ & {\em holding conditions}  \\
\hline
  {\em 32} &  \textrm{5 among } {\bf A}, {\bf B1},  {\bf B2}, {\bf C}, {\bf D} &  {\em 4} &
\textrm{2 among } {\bf A}, {\bf B1},  {\bf B2}, {\bf C}, {\bf D}\\
\hline
 {\em 16} &  \textrm{4 among } {\bf A}, {\bf B1}, {\bf B2}, {\bf C}, {\bf D} &  {\em 2} &
\textrm{1 among } {\bf A}, {\bf B1},  {\bf B2}, {\bf C}, {\bf D}\\
\hline
  {\em 8} &  \textrm{3 among } {\bf A}, {\bf B1}, {\bf B2}, {\bf C}, {\bf D} &  {\em 1} &
\textrm{ no holding conditions }  \\
\hline
\multicolumn{4}{c}{  }\\
\multicolumn{4}{c}{{\em Table 2}} \\
\end{tabular}
\end{center}

\end{thm}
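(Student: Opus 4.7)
The strategy is to realize $K_5$ as the top of a tower of five successive quadratic (or trivial) extensions over $K$, each generated by a single square root, and to match each step of the tower to exactly one of the conditions \textbf{A}, \textbf{B1}, \textbf{B2}, \textbf{C}, \textbf{D}. By Theorem \ref{gen2} we have $K_5 = K(\zeta_5, i, \sqrt{(\theta_1+1)b\sqrt{\theta_1 b}})$; I refine the adjunction of $\zeta_5$ by first inserting the real subfield generator $\zeta_5+\zeta_5^{-1} = (-1+\sqrt{5})/2$, producing exactly five potential doublings to match the five conditions.

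Concretely, I would analyze the tower
\begin{equation*}
K \subseteq K(i) \subseteq K(i,\zeta_5+\zeta_5^{-1}) \subseteq K(i,\zeta_5) \subseteq K(i,\zeta_5,\sqrt{\theta_1 b}) \subseteq K_5,
\end{equation*}
each of whose five steps is generated by a square root and thus has degree $1$ or $2$. I would then show that the $j$-th step is nontrivial precisely when the $j$-th condition in the list \textbf{A}, \textbf{B1}, \textbf{B2}, \textbf{C}, \textbf{D} holds. Multiplicativity of degrees in a tower immediately yields $[K_5:K]=2^{k}$, where $k$ is the number of holding conditions, reproducing the seven rows of Table 2 in one stroke.

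The correspondence is transparent for three of the steps: $[K(i):K] = 2$ iff \textbf{A} holds; adjoining $\sqrt{\theta_1 b}$ to $K(i,\zeta_5)$ doubles the degree iff \textbf{C} holds; and the final step doubles iff \textbf{D} holds, each by the very definition of the condition. The main obstacle, I expect, is the cyclotomic middle: \textbf{B1} and \textbf{B2} are stated relative to $K$ and $K(\zeta_5+\zeta_5^{-1})$ rather than relative to the actual base fields $K(i)$ and $K(i,\zeta_5+\zeta_5^{-1})$ of the relevant tower steps. To bridge this gap I would invoke the subfield lattice of $\QQ(\zeta_{20})$, in which $\QQ(i)$, $\QQ(\sqrt{5})$ and $\QQ(\zeta_5)$ sit with $\QQ(i)\cap\QQ(\sqrt{5}) = \QQ$ and $\QQ(\zeta_5) \cap \QQ(i,\sqrt{5}) = \QQ(\sqrt{5})$, so that enlarging the base from $K$ to $K(i)$ cannot produce $\zeta_5+\zeta_5^{-1}$, and enlarging from $K(\zeta_5+\zeta_5^{-1})$ to $K(i,\zeta_5+\zeta_5^{-1})$ cannot produce $\zeta_5$. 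Once these linear disjointness statements are pinned down, the degree of each cyclotomic step is controlled exactly by the stated condition, and Table 2 follows from multiplicativity of degrees in the tower.
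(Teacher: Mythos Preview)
Your tower is identical to the paper's, and the multiplicativity argument is the same; the paper's proof simply writes down this tower and declares the remaining computation ``straightforward'' without further justification.

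Where you go further is in flagging that \textbf{B1} and \textbf{B2} are phrased over $K$ and $K(\zeta_5+\zeta_5^{-1})$, not over the actual bases $K(i)$ and $K(i,\zeta_5+\zeta_5^{-1})$ of the corresponding tower steps. Your proposed fix---transporting the linear-disjointness relations inside $\QQ(\zeta_{20})$ up to $K$---does not work for a general number field. Take $K=\QQ(\sqrt{-5})$: then $i\notin K$ and $\sqrt{5}\notin K$, so \textbf{A} and \textbf{B1} hold, and since $K(\zeta_5+\zeta_5^{-1})=\QQ(\sqrt{-5},\sqrt{5})=\QQ(i,\sqrt{5})$ is biquadratic while $\QQ(\zeta_5)$ is cyclic of degree~$4$, condition \textbf{B2} holds as well. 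Yet $(-i\sqrt{-5})^2=5$, so $\sqrt{5}\in K(i)$ and the step $K(i)\subseteq K(i,\zeta_5+\zeta_5^{-1})$ collapses; one gets $[K(i,\zeta_5):K]=4$ rather than $2^3=8$. Linear disjointness over $\QQ$ is destroyed once $K$ already contains a product such as $\sqrt{-1}\cdot\sqrt{5}$.

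So the subtlety you spotted is genuine, and your bridging argument does not carry the weight you place on it. The paper's own proof does not engage with this point at all, so your write-up is in that sense no less rigorous than the original; but the specific justification you propose for the \textbf{B}-steps is incorrect as stated.
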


\begin{proof}
Consider the tower of extensions

\[
\begin{split}  K & \subseteq K(i)\subseteq K(i,\zeta_5+\zeta_5^{-1}) \subseteq K(i,\zeta_5) \subseteq K(i,\zeta_5,\sqrt{(\theta_1)b})
\subseteq K(\zeta_3,\zeta_5, \sqrt{(\theta_1+1)b\sqrt{(\theta_1)b}}).\\
\end{split}
\]

\noindent The degree of $K_5/K$ is the product of the degrees of the intermediate extensions appearing in the tower.
Clearly each of those extensions gives a contribution to the degree less than or equal to 2.
The final computation is straightforward.
\end{proof}

\section{Galois groups $\Gal(K_5/K)$ for the curves of $\mcF_1$} \label{gal1}

Let $E_1$ be a curve of the family $\mcF_1$, let $G:=\textrm{Gal}(K(\E_2[5])/K)$ and let $d:=|G|$.
Let  $\theta_1$ and $\omega_1$  as above and let

\[
\begin{split}
\theta_2&:=- (- 4z_5^3  + (-4i-4)z_5^2  - 4i z_5 - 2i + 1);\\
\theta_3&:=- ((4i + 4)z_5^3  + 4z_5^2  +4i z_5 + 2i + 5); \\
\theta_4&:=- (- 4z_5^3  + (4i-4)z_5^2  + 4i z_5 + 2i + 1);\\
\omega_2&:=- \ddfrac{2i+1}{5}b.\\
\end{split} \]

\noindent If $P=(x,y)$ is a point of $\E$, to ease notation, let us denote by $iP$ the point $\phi_1(P)=(-x,iy)$. 
 The 24 points of exact order 5 of $\E_2$ are the following:

\[
\begin{split}
 \pm P_1&:=(x_1,\pm y_1)=\left(\sqrt{\theta_1b}, \pm \sqrt{(\theta_1+1)b\sqrt{\theta_1b}}\right) \hspace{1cm} \pm iP_1:=(-x_1,\pm iy_1);\\
 \pm P_2&:=(x_2,\pm y_2)=\left(\sqrt{\theta_2b}, \pm \sqrt{(\theta_2+1)b\sqrt{\theta_2b}}\right) \hspace{1cm} \pm iP_2:=(-x_2,\pm iy_2);\\
\pm P_3&:=(x_3,\pm y_3)=\left(\sqrt{\theta_3b}, \pm \sqrt{(\theta_3+1)b\sqrt{\theta_3b}}\right) \hspace{1cm} \pm iP_3:=(-x_3,\pm iy_3);\\
 \pm P_4&:=(x_4,\pm y_4)=\left(\sqrt{\theta_4b}, \pm \sqrt{(\theta_4+1)b\sqrt{\theta_4b}}\right) \hspace{1cm} \pm iP_4:=(-x_4,\pm iy_4);\\
\pm P_5&:=(x_5,\pm y_5)=\left(\sqrt{\omega_1b}, \pm \sqrt{(\omega_1+1)b\sqrt{\omega_1b}}\right) \hspace{0.8cm} \pm iP_5:=(-x_5,\pm iy_5);\\
\pm P_6&:=(x_6,\pm y_6)=\left(\sqrt{\omega_2b}, \pm \sqrt{(\omega_2+1)b\sqrt{\omega_2b}}\right) \hspace{0.8cm} \pm iP_6:=(-x_6,\pm iy_6).
\end{split} \]

\noindent By the observations made in the previous sections about the generators of $K_5$ and about the degree $[K_5:K]$, we have that
The Galois group is generated by the following 3 automorphisms.

\begin{description}

  \item[$i)$] The automorphism $\phi_1$ of order 4 given by the complex multiplication. We have $\phi_1(x,y)=(-x,iy)$, for all $(x,y)\in K(\E[5])$. 
 In particular, for every $1\leq j\leq 6$, the automorphism $\phi_1$ maps $\sqrt{\theta_jb}$ to 
$-\sqrt{\theta_jb}$ (i. e. $x_j$ to $-x_j$) and $y_1$ to $iy_1$. Thus $\phi_1(P_j)=iP_j$, for all $1\leq j\leq 6$.
 Observe that $\phi_1^2=-\textrm{Id}$.

  \item[$ii)$] The automorphism $\psi_1$ of order 4 mapping $\z_5$ to $\z_5^2$. Observe that

$$P_1 \xmapsto{\psi_1} P_2  \xmapsto{\psi_1} P_3  \xmapsto{\psi_1} P_4  \xmapsto{\psi_1}  P_1,$$

\noindent as well as

$$iP_1 \xmapsto{\psi_1} iP_2 \xmapsto{\psi_1} iP_3 \xmapsto{\psi_1} iP_4 \xmapsto{\psi_1} iP_1.$$

\noindent The other $5$-torsion points are fixed by $\psi_1$.  

  \item[$iii)$] The automorphism $\rho_1$ of order 2 of the quadratic field of the complex multiplication, that
maps $i$ to $-i$. Observe that such an automorphism swaps $P_1$ and $P_3$ and swaps $P_2$ and
$P_4$

$$P_1 \overset{\rho_1}{\longleftrightarrow} P_3 \hspace{3cm} P_2 \overset{\rho_1}{\longleftrightarrow}  P_4.$$

\noindent Furthermore

$$iP_1 \overset{\rho_1}{\longleftrightarrow} -iP_3 \hspace{3cm} iP_2 \overset{\rho_1}{\longleftrightarrow}  -iP_4;$$

$$P_5 \overset{\rho_1}{\longleftrightarrow}  P_6 \hspace{3cm} iP_6 \overset{\rho_1}{\longleftrightarrow}  -iP_6.$$

\end{description}

\noindent By \cite[Chapter II, Theorem 2.3]{Sil2}, the extension $K_5/K(i)$ is abelian, thus
$\langle \phi_1, \psi_1 \rangle \simeq \ZZ/4\times \ZZ/4$, when all the conditions in the statement of Theorem \ref{gen2} hold. Moreover, with a quick computation,
one verifies that $\psi_1$ and $\rho_1$ commute. On the contrary $\phi_1$ and $\rho_1$  
do not commute in general, in fact

$$\rho_1\phi_1((x_1,y_1))=\rho_1((-x_1, iy_1))=(-x_3,-iy_3);$$ 

$$\phi_1\rho_1((x_1,y_1))=\phi_1((x_3, y_3))=(-x_3,iy_3).$$ 

\noindent Instead we have $\rho_1\phi_1((P_1))=\phi_1^{-1}\rho((P_1))$ and $\rho_1\phi_1((iP_1))=\phi_1^{-1}\rho_1((iP_1))$.
Being $\{P_1, iP_1\}$ a generating set for $K_5$, we can conclude $\rho_1\phi_1=\phi_1^{-1}\rho$.
Thus, when all the condition hold, we have $\langle\phi_1, \rho_1\rangle \simeq D_{8}$. We are going to describe the Galois groups
$G=\Gal(K_5/K)$, with respect to the degrees $[K_5:K]$.

 \begin{description}

\item[$d=32$]
\par If the degree $d$ of the extension $K_5/K$ is 32, then all the conditions hold. We have
$G=\langle \phi_1, \psi_1, \rho_1| \phi_1^4=\psi_1^4=\rho_1^2=\Id, \phi_1\psi_1=\psi_1\phi_1, \rho_1\psi_1=\psi_1\rho_1, \phi_1\rho_1=\phi_1^{-1}\rho_1 
\rangle\simeq D_{8} \times \ZZ/4\ZZ$.

\item[$d=16$]
If the degree $d$ of the extension $K_5/K$ is 16, then only one condition does not hold. 
\par If {\bf A} does not hold, then $\rho_1$ is the identity and we have an abelian group $G=\langle \phi_1, \psi_1 \rangle \simeq \ZZ/4\times \ZZ/4$.
\par  If one among  {\bf B1} and {\bf B2} does not hold, then $G\simeq D_8 \times \ZZ/2\ZZ$.
 \par  If one among  {\bf C} and {\bf D} does not hold, then $G\simeq \ZZ/4\ZZ \times (\ZZ/2\ZZ)^2$.

\item[$d=8$]
If the degree $d$ of the extension $K_5/K$ is 8, then two  conditions do not hold among the ones as above. 
\par If {\bf B1} and {\bf B2} do not hold, then $G\langle \phi_1, \rho_1\rangle \simeq D_8$. This is the only
case in which the Galois group $G$ is not abelian.
\par If one among  {\bf B1} and {\bf B2} does not hold and {\bf A} does not hold, then
$G\simeq \ZZ/4\ZZ \times \ZZ/2\ZZ$.
\par If one among  {\bf B1} and {\bf B2} does not hold and one among  {\bf C} and {\bf D} does not hold then
$G\simeq (\ZZ/2\ZZ)^3$.
\par If  one among  {\bf C} and {\bf D} does not hold and {\bf A} does not hold, then
$G\simeq \ZZ/4\ZZ \times \ZZ/2\ZZ$ again.

\item[$d=4$]
If  the degree $d$ of the extension $K_5/K$ is 4, then three conditions do not hold. If both {\bf B1} and {\bf B2} hold
or if  both {\bf C} and {\bf D} hold, then $G\simeq \ZZ/4\ZZ$. Otherwise $G\simeq \ZZ/2\ZZ \times \ZZ/2\ZZ$.

\item[$d\leq 2$]
If the degree $d$ of the extension $K_5/K$ is either 2 or 1, clearly the Galois group is respectively  $\Z/2\Z$ or $\{{\rm Id}\}$.

\end{description}

\section{Generators of  $K(\E[5])$ for elliptic curves $y^2=x^3+c$} \label{subgen1}

Let $\E_2: y^2=x^3+c$ be an elliptic curve of the family  $\mcF_2$.

\begin{remark} \label{rem1} Let $\phi_2$ denote the
complex multiplication of $\E_2$, i. e. $\phi_2(x,y)=(\z_3x,y)$. In many cases, if $P$ is a
nontrivial $m$-torsion point, then $\phi_2(P)$ is an $m$-torsion point that is not a multiple of $P$.
Therefore, in many cases a basis for $\E[m]$ is given by $\{P,\phi_2(P)\}$ and $K_m=K(x(P),y(P),\z_3)$.
Anyway, in a few special cases, the point $\phi_2(P)$ is a multiple of $P$ over the field $K(\z_3, \z_5)$. For example,
the abscissas of the $3$-torsion points of $\E_1$ are

$$\tilde{x_1}=0; \hspace{0.3cm} \tilde{x_2}=\sqrt[3]{-4c}; \hspace{0.3cm} \tilde{x_3}=\zeta_3\tilde{x_2}; \hspace{0.3cm} \tilde{x_4}=\zeta_3^2\tilde{x_2}.$$

\noindent Let $\tilde{P_h}$ be a point of abscissas $\tilde{x_h}$, for $1\leq h\leq 4$.
Clearly $\phi_2(\tilde{P_1})=\tilde{P_1}$ and then $\{\tilde{P_1},\phi_2(\tilde{P_1})\}$ is not a basis of $\E[3]$.
On the other hand,  $\{\tilde{P_h},\phi_2(\tilde{P_h})\}$ is a basis of $\E[3]$, for $2\leq h\leq 4$.
So we have to take care in our choice of $P$, when we use such a basis $\{P,\phi_2(P)\}$. 
For elliptic curves with complex multiplication $\phi_2$,
a generating set $\{x(P),y(P),\z_3\}$ is often easier to adopt than the one
in \eqref{genset}.
\end{remark}

\bigskip
 The abscissas of the points of order 5 of ${\mathcal{E}}$ are the roots of the polynomial

 $$r_5(x):=-5x^{12}-380cx^9+240c^2x^6+1600c^3x^3+256c^4.$$

\vspace{0.5cm}
   A factorization of $\varphi_1$ over $K(\zeta_5)$ is

$$r_5(x)= - 5\cdot (x^6  + (- 36\z_5^3  - 36\z_5^2  + 20)c x^3  +\frac{-288\z_5^3-288\z_5^2+176}{5} c^2 )$$

     $$\cdot (x^6  + ( 36\z_5^3  +36\z_5^2  + 56)c x^3  +\frac{288\z_5^3-288\z_5^2+464}{5} c^2 )$$

\bigskip\noindent and a factorization of $r_5(x)$ over $K(\z_3,\zeta_5)$ is

  $$r_5(x)= -5\cdot (x^3+\frac{(- 132\z_3 + 24)\z_5^3  + (36\z_3 + 108)\z_5^2  + (- 96\z_3 - 48)\z_5 - 48\z_3+116}{5}c)$$

\bigskip\hspace{1cm} $\cdot (x^3+$ {\Large $ \frac{( -36\z_3 -108)\z_5^3  + (-132\z_3 -156)\z_5^2  + (- 168\z_3 - 84)\z_5 - 84\z_3+8}{5}$ }$c)$

\bigskip\hspace{1cm} $\cdot (x^3+$ {\Large $\frac{( 132\z_3 + 156)\z_5^3  + (-36\z_3 + 72)\z_5^2  + ( 96\z_3 + 48)\z_5 + 48\z_3+164}{5}$} $c)$

\bigskip\hspace{1cm} $\cdot (x^3+$ {\Large $ \frac{( 36\z_3 -72)\z_5^3  + (132\z_3 -24)\z_5^2  + ( 168\z_3 + 84)\z_5 + 84\z_3+92}{5}$ } $c)$

\bigskip

\noindent Let \[
\begin{split} \delta_1 & :=-(\frac{(- 132\z_3 + 24)\z_5^3  + (36\z_3 + 108)\z_5^2  + (- 96\z_3 - 48)\z_5 - 48\z_3+116}{5}); \\
  \delta_2 & :=-(\frac{( -36\z_3 -108)\z_5^3  + (-132\z_3 -156)\z_5^2  + (- 168\z_3 - 84)\z_5 - 84\z_3+8}{5}); \\
 \delta_3 & :=-(\frac{(132\z_3 + 156)\z_5^3  + (-36\z_3 + 72)\z_5^2  + ( 96\z_3 + 48)\z_5 + 48\z_3+164}{5}); \\
  \delta_4 & :=-(\frac{( 36\z_3 -72)\z_5^3  + (132\z_3 -24)\z_5^2  + ( 168\z_3 + 84)\z_5 + 84\z_3+92}{5}). \\
 \end{split}
 \]

 \noindent Then the 12 roots of $r_5(x)$, i. e. the abscissas of the $5$-torsion points of $\E_1$,
are  $\sqrt[3]{\delta_1 c}$, $\sqrt[3]{\delta_1c}\z_3$,  $\sqrt[3]{\delta_1c}\z_3^2$,
$\sqrt[3]{\delta_2 c}$, $\sqrt[3]{\delta_2c}\z_3$,  $\sqrt[3]{\delta_2c}\z_3^2$,
$\sqrt[3]{\delta_3 c}$, $\sqrt[3]{\delta_3c}\z_3$,  $\sqrt[3]{\delta_3c}\z_3^2$,
$\sqrt[3]{\delta_4 c}$, $\sqrt[3]{\delta_4c}\z_3$,  $\sqrt[3]{\delta_4c}\z_3^2$.

\bigskip

\begin{thm} \label{zeta3} Let $\delta_1$ as above.
We have $K_5=K(\sqrt[3]{\delta_1c},\zeta_3,\sqrt{(\delta_1+1)c})$.
\end{thm}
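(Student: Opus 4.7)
The plan is to mimic the proof of Theorem~\ref{gen2}, with the roles of the complex multiplication $\phi_1$ and the polynomial $q_5(x)$ replaced by $\phi_2(x,y)=(\zeta_3 x,y)$ and $r_5(x)$. Specifically, I would pick a cube root of $\delta_1 c$, read off the corresponding $5$-torsion point from the Weierstrass equation, verify that together with its image under $\phi_2$ it forms a $\Z$-basis of $\E_2[5]$, and then invoke Remark~\ref{rem1}.

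First I would set $x_1:=\sqrt[3]{\delta_1 c}$. By the factorization of $r_5(x)$ over $K(\zeta_3,\zeta_5)$ displayed in this section, $x_1$ is the abscissa of a $5$-torsion point $P_1=(x_1,y_1)$ of $\E_2$. From the Weierstrass equation
\[ y_1^2 \;=\; x_1^3+c \;=\; \delta_1 c + c \;=\; (\delta_1+1)c, \]
one may take $y_1=\sqrt{(\delta_1+1)c}$.

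Next I would check that $\phi_2(P_1)=(\zeta_3 x_1,y_1)$ is not a multiple of $P_1$. Since $nP_1$ and $(5-n)P_1$ share their abscissa, it suffices to exclude $x(nP_1)=\zeta_3 x_1$ for $n\in\{1,2\}$. The case $n=1$ is immediate because $\zeta_3\neq 1$. For $n=2$ I would apply the duplication formula for $y^2=x^3+c$, which gives
\[ x(2P_1) \;=\; \frac{x_1^4-8cx_1}{4(x_1^3+c)} \;=\; \frac{x_1(\delta_1-8)}{4(\delta_1+1)}, \]
using $x_1^3=\delta_1 c$. So the check reduces to verifying the single inequality $(\delta_1-8)/(4(\delta_1+1))\neq \zeta_3$, i.e.\ that $\delta_1$ does not coincide with the specific element $(8+4\zeta_3)/(1-4\zeta_3)\in K(\zeta_3)$. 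This is a direct (though slightly tedious) calculation using the explicit polynomial expression of $\delta_1$ in $\zeta_3$ and $\zeta_5$, and it is the main obstacle in the argument.

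Once the non-multiplicity is established, $\{P_1,\phi_2(P_1)\}$ is a $\Z$-basis of $\E_2[5]$, and Remark~\ref{rem1} yields
\[ K_5 \;=\; K\bigl(x(P_1),y(P_1),\zeta_3\bigr) \;=\; K\!\left(\sqrt[3]{\delta_1 c},\,\zeta_3,\,\sqrt{(\delta_1+1)c}\right), \]
as claimed. As a sanity check I would also remark that the Weil-pairing containment $K(\zeta_5)\subseteq K_5$ is automatic, since $\delta_1$ involves $\zeta_5$ non-trivially and hence $\zeta_5\in K(\delta_1,\zeta_3)\subseteq K(\sqrt[3]{\delta_1 c},\zeta_3)$.
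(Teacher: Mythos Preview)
Your proposal is correct and follows essentially the same route as the paper's proof: pick $x_1=\sqrt[3]{\delta_1 c}$, compute $y_1$ from the Weierstrass equation, verify via the abscissas of $nP_1$ that $\phi_2(P_1)$ is not a multiple of $P_1$, and conclude by Remark~\ref{rem1}. The only cosmetic difference is that the paper records the explicit value $x(2P_1)=\bigl((\zeta_3+2)\zeta_5^3+(-\zeta_3+1)\zeta_5^2+1\bigr)\sqrt[3]{\delta_1 c}$ rather than leaving the coefficient in the form $(\delta_1-8)/(4(\delta_1+1))$, but the verification is the same.
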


\begin{proof} If $x_1:=\sqrt{\delta_1c}$, then by the factorization of $r_5(x)$ showed above,
we have that $x_1$ is the abscissas of a $5$-torsion point of $\E$. Let $y_1:=\sqrt{(\delta_1+1)c}$.
Then $P_1=(x_1,y_1)$ is a $5$-torsion point of $\E$.
By calculating $\phi_2(P_1)$ and the powers of $P_1$, one sees that $\phi_2(P_1)$ is not
a multiple of $P_1$. In fact $\phi_2(P_1)=(x_1\zeta_3,y_1)=(\sqrt{\delta_1c}\z_3,y_1)$ and
  $x(2P_1)=x(3P_1)=((\z_3 + 2)\z_5^3 + (-\z_3 + 1)\z_5^2 +1)\sqrt[3]{\delta_1c}$.
Thus
  $x(\phi_2(P_1))\neq x(nP_1)$, for all $1\leq n \leq 4$ (recall that $x(4P_1)=x(P_1)$).
  By Remark \ref{rem1}, then $\{P_1, \phi_1(P_1)\}$ form a basis of
$\E[5]$ and the conclusion is straightforward.
\end{proof}

Observe that  $[K_5:K]\leq 3\cdot 2\cdot 4\cdot 2=48$, for every $c\in K$.
This is in accordance with the fact that $\E$ has complex multiplication
$\phi_1: (x,y)\mapsto (\z_3x,y)$ and then the Galois representation

\[ \rho_{\E,5}:\Gal(\ov{K}/K) \rightarrow  \GL_2(\Z/5\Z) \]

\noindent is not surjective.




\section{Degrees $[K_5:K]$ for the curves of $\mcF_2$} \label{sub1}

As above, let $K$ be a number field and let $\E$ be an
elliptic curve defined over $K$.

\begin{thm}
Let $\E: y^2=x^3+c$, with $c\in K$. Let $\delta_1$ as above. Consider the conditions
\[
\begin{array}{lll}
& {\bf A.} \hspace{0.3cm} \z_3\notin K; \hspace{1.5cm}  & \\
& {\bf B1.} \hspace{0.3cm} \z_5+\z_5^{-1}\notin K(\z_3);  & {\bf C.} \hspace{0.3cm} \sqrt[3]{(\delta_1)c}\notin K(\z_3,\z_5); \\
& {\bf B2.} \hspace{0.3cm} \z_5\notin K(\z_3,\z_5+\z_5^{-1});  \hspace{1.5cm} & {\bf D.} \hspace{0.3cm} \sqrt{(\delta_1+1)c}\notin K(\z_3,\z_5).
\end{array}
\]

The possible degrees of the extension $K_5/K$ are the following

\begin{center}
\begin{tabular}{|c|c|c|c|}
\hline
 $d$ & {\em holding conditions}  &  $d$ & {\em holding conditions}  \\
\hline
 {\em 48} & {\bf A}, {\bf B1}, {\bf B2}, {\bf C}, {\bf D} &  {\em 6} & {\bf C} \textrm{and 1 among } {\bf A}, {\bf B1}, {\bf B2}, {\bf D} \\
\hline
 {\em 24} & {\bf C} \textrm{and 3 among } {\bf A}, {\bf B1}, {\bf B2}, {\bf D} &  {\em 4} & \textrm{2 among } {\bf A}, {\bf B1}, {\bf B2}, {\bf D} \\
\hline
 {\em 16} & {\bf A}, {\bf B1}, {\bf B2}, {\bf D} & {\em 3} &  {\bf C}  \\
\hline
 {\em 12} & {\bf C} \textrm{and 2 among } {\bf A}, {\bf B1}, {\bf B2}, {\bf D} &  {\em 2} & \textrm{1 among } {\bf A}, {\bf B1}, {\bf B2}, {\bf D}   \\
\hline
 {\em 8} & \textrm{  3 among } {\bf A}, {\bf B1}, {\bf B2}, {\bf D} &  {\em 1} & \textrm{ no holding conditions }   \\
\hline
\multicolumn{4}{c}{  }\\
\multicolumn{4}{c}{{\em Table 1}}
\end{tabular}
\end{center}

\end{thm}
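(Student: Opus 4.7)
The plan is to mimic the tower-of-fields argument used for the family $\mcF_1$ in Section~\ref{sub3}, adapting it to accommodate the cube root that now appears in the generating set of $K_5$. By Theorem~\ref{zeta3} one has $K_5 = K(\zeta_3, \sqrt[3]{\delta_1 c}, \sqrt{(\delta_1+1)c})$; since $\delta_1 \in K(\zeta_3,\zeta_5)$ and $(\sqrt[3]{\delta_1 c})^3 = \delta_1 c$, the field $K_5$ automatically contains $\zeta_5$ (in accordance with the Weil pairing). I would therefore interpolate the tower
\[
K \subseteq K(\zeta_3) \subseteq K(\zeta_3,\zeta_5+\zeta_5^{-1}) \subseteq K(\zeta_3,\zeta_5) \subseteq K(\zeta_3,\zeta_5,\sqrt[3]{\delta_1 c}) \subseteq K_5,
\]
and compute $[K_5:K]$ as the product of the degrees of the successive layers.

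At each of the four quadratic adjunctions the layer degree is $1$ or $2$, and at the cube-root layer it is $1$ or $3$: since $\zeta_3$ has already been adjoined, the polynomial $x^3 - \delta_1 c$ either splits completely or is irreducible over $K(\zeta_3,\zeta_5)$. Conditions \textbf{A}, \textbf{B1}, \textbf{B2}, \textbf{C} are then exactly the non-triviality statements of the first four layers. Condition \textbf{D} is phrased over $K(\zeta_3,\zeta_5)$ rather than over $K(\zeta_3,\zeta_5,\sqrt[3]{\delta_1 c})$, but since the cubic layer has degree coprime to $2$ no new square root can appear inside it; hence $\sqrt{(\delta_1+1)c}$ belongs to $K(\zeta_3,\zeta_5,\sqrt[3]{\delta_1 c})$ if and only if it belongs to $K(\zeta_3,\zeta_5)$, so \textbf{D} is equivalent to the top layer being nontrivial.

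Combining these observations gives
\[
[K_5:K] = 3^{\epsilon_{\mathbf{C}}} \cdot 2^{\epsilon_{\mathbf{A}} + \epsilon_{\mathbf{B1}} + \epsilon_{\mathbf{B2}} + \epsilon_{\mathbf{D}}},
\]
where each $\epsilon_{\mathbf{X}}$ is $1$ when condition \textbf{X} holds and $0$ otherwise. Enumerating the $2^5 = 32$ combinations and grouping by the resulting product recovers exactly the entries of Table~1. The main obstacle I anticipate is the step showing that conditions \textbf{C} and \textbf{D} are genuinely independent, that is, that the cube-root and square-root adjunctions do not collapse into one another over $K(\zeta_3,\zeta_5)$; this is handled by the coprimality argument sketched above, which is the essential new ingredient absent from the $\mcF_1$ analysis. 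Once independence is secured, the rest is a straightforward multiplicativity-of-degrees computation followed by combinatorial bookkeeping.
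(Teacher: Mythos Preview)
Your proposal is correct and follows essentially the same approach as the paper: both set up the tower $K\subseteq K(\zeta_3)\subseteq K(\zeta_3,\zeta_5+\zeta_5^{-1})\subseteq K(\zeta_3,\zeta_5)\subseteq K(\zeta_3,\zeta_5,\sqrt[3]{\delta_1 c})\subseteq K_5$ and compute the degree as the product of the layer degrees, with the cubic layer contributing $1$ or $3$ and the rest $1$ or $2$. In fact you supply more detail than the paper does, which simply declares the final computation straightforward without addressing the coprimality point that reconciles condition \textbf{D} (phrased over $K(\zeta_3,\zeta_5)$) with the top layer of the tower.
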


\begin{proof}
Consider the tower of extensions
\[ K\subseteq K(\z_3)\subseteq K(\z_3,\z_5+\z_5^{-1}) \subseteq K(\z_3,\z_5) \subseteq K(\z_3,\z_5,\sqrt[3]{(\delta_1)c})
\subseteq K(\z_3,\z_5,\sqrt[3]{(\delta_1)c}, \sqrt{(\delta_1+1)c}).\]
The degree of $K_5/K$ is the product of the degrees of the intermediate extensions appearing in the tower.
Each of those extension gives a contribution to the degree that is less than or equal to 2, except the extension
$K(\zeta_3,\zeta_5,\sqrt[3]{(\delta_1)c})/K(\zeta_3,\zeta_5)$ that gives a contribution equal to
1 or 3. The final computation is straightforward.
\end{proof}

\section{Galois groups $\Gal(K_5/K)$ for the curves of $\mcF_2$} \label{sub2}

Let $\E_1$ be a curve of the family $\mcF_1$ and let $d:=|\textrm{Gal}(K(\E_1[5])/K)|$. To study the Galois group $G:=\textrm{Gal}(K(\E_1[5])/K)$,
we have to understand better the shapes of the coordinates of the $5$-torsion points of $\E_1$. Let $\delta_1, \delta_2,
\delta_3, \delta_4$ as in Section \ref{subgen1}. The 24 torsion points of $\E$ with exact order 5 are:

\[
\begin{array}{llllll}
 \pm P_1 &=(x_1,\pm y_1) &=\left(\sqrt[3]{\delta_1c}, \pm  \sqrt{(\delta_1+1)c}\right); \\
\pm \phi_2(P_1)  &=(\z_3x_1,\pm y_1) &=\left(\sqrt[3]{\delta_1c}\hspace{0.1cm} \z_3, \pm  \sqrt{(\delta_1+1)c}\right);  \\
\pm \phi_2^2(P_1) &=(\z_3^2x_1,\pm y_1) &= \left(\sqrt[3]{\delta_1c}\hspace{0.1cm} \z_3^2, \pm  \sqrt{(\delta_1+1)c}\right); \\
\end{array} \]

\[
\begin{array}{llllll}
  \pm P_2 &=(x_2,\pm y_2) &=\left(\sqrt[3]{\delta_2c}, \pm  \sqrt{(\delta_2+1)c}\right); \\
  \pm \phi_2(P_2)&=(\z_3x_2,\pm y_2) &=\left(\sqrt[3]{\delta_2c}\hspace{0.1cm} \z_3, \pm  \sqrt{(\delta_2+1)c}\right); \\
  \pm \phi_2^2(P_2) &=(\z_3^2x_2,\pm y_2) &= \left(\sqrt[3]{\delta_2c}\hspace{0.1cm} \z_3^2, \pm  \sqrt{(\delta_2+1)c}\right);\\
\end{array} \]

\[
\begin{array}{llllll}
 \pm P_3 &=(x_3,\pm y_3) &=\left(\sqrt[3]{\delta_3c}, \pm  \sqrt{(\delta_3+1)c}\right); \\
\pm \phi_2(P_3)  &=(\z_3x_3,\pm y_3) &=\left(\sqrt[3]{\delta_3c}\hspace{0.1cm} \z_3, \pm  \sqrt{(\delta_3+1)c}\right);  \\
\pm \phi_2^2(P_3) &=(\z_3^2x_3,\pm y_3) &= \left(\sqrt[3]{\delta_3c}\hspace{0.1cm} \z_3^2, \pm  \sqrt{(\delta_3+1)c}\right); \\
\end{array} \]

\[
\begin{array}{llllll}
  \pm P_4 &=(x_4,\pm y_4) &=\left(\sqrt[3]{\delta_4c}, \pm  \sqrt{(\delta_4+1)c}\right); \\
  \pm \phi_2(P_4) &=(\z_3x_4,\pm y_4) &=\left(\sqrt[3]{\delta_4c}\hspace{0.1cm} \z_3, \pm  \sqrt{(\delta_4+1)c}\right); \\
  \pm \phi_2^2(P_4) &=(\z_3^2x_4,\pm y_4) &= \left(\sqrt[3]{\delta_4c}\hspace{0.1cm} \z_3^2, \pm  \sqrt{(\delta_4+1)c}\right).\\
\end{array} \]

  \bigskip Thus we have the following four generating automorphisms of the
Galois group $G$.

\begin{description}
  \item[$i)$] The automorphism $\phi_2$ of the complex multiplication permuting the abscissas as follows

 $$\hspace{1cm}  \sqrt[3]{\delta_i c} \mapsto \sqrt[3]{\delta_i c}\z_3 \mapsto \sqrt[3]{\delta_i c}\z_3^2 \mapsto \sqrt[3]{\delta_i c},$$

\noindent for all $1\leq i\leq 4$, and fixing all the ordinates. Clearly $\phi_2$ has order 3.

  \item[$ii)$] The automorphism $\varphi_1$ of order 4  mapping $ \z_5$ to $\z_5^2,$, that consequentely maps

$$\delta_1  \mapsto \delta_2  \mapsto \delta_3  \mapsto \delta_4  \mapsto \delta_1 ,$$ 

\noindent i. e.

$$P_1 \xmapsto{\varphi_1} P_2 \xmapsto{\varphi_1} P_3  \xmapsto{\varphi_1} P_4 \xmapsto{\varphi_1}  P_1;$$

$$\phi_2{P_1} \xmapsto{\varphi_1} \phi_2{P_2}  \xmapsto{\varphi_1} \phi_2{P_3}  \xmapsto{\varphi_1} \phi_2{P_4}  
\xmapsto{\varphi_1}  \phi_2{P_1};$$

$$\phi_2^2{P_1} \xmapsto{\varphi_1} \phi_2^2{P_2}  \xmapsto{\varphi_1} \phi_2^2{P_3}  \xmapsto{\varphi_1} \phi_2^2{P_4}  
\xmapsto{\varphi_1}  \phi_2^2{P_1}.$$


  \item[$iii)$] The automorphism -Id of order 2, mapping
$\sqrt{(\delta_i+1)c}$ to $-\sqrt{(\delta_i+1)c}$, for all  $1\leq i\leq 4$, such that


$$P\xmapsto{-\textrm{Id}} -P,$$

\noindent for all $P\in \E[5]$.

 \item[$iv)$] The automorphism $\varphi_2$ of order 2 of the quadratic field of the complex multiplication mapping $\z_3$ to $\z_3^2,$ and
then swapping $\delta_1$ and $\delta_3$ and also
  $\delta_2$ and $\delta_4$. In particular
 
\[\begin{split}
P_1 & \overset{\varphi_2}{\longleftrightarrow} P_3 \hspace{4.4cm} P_2 \overset{\varphi_2}{\longleftrightarrow}  P_4;\\
& \\
\phi_2(P_1) & \overset{\varphi_2}{\longleftrightarrow} \phi_2^2(P_3) \hspace{3cm} \phi_2(P_2) \overset{\varphi_2}{\longleftrightarrow}  \phi_2^2(P_4);\\
& \\
\phi_2^2(P_1) & \overset{\varphi_2}{\longleftrightarrow} \phi_2(P_3) \hspace{3cm} \phi_2^2(P_2) \overset{\varphi_2}{\longleftrightarrow}  \phi_2(P_4).\\
\end{split}\]
\end{description}

One easily verifies that all these authomorphisms commute, except $\phi_2$ and $\varphi_2$.


Observe that $\psi_2:=\phi_2\circ \varphi_1$ form an homomorphism of order 12 and that
$G=\langle \psi_2, \varphi_2, -\textrm{Id}\rangle$. The automorphism $\psi_2$ and $\varphi_2$ does not commute,
since $\phi_2$ does not commute with $\varphi_2$,  instead one can verify that $\varphi_2\circ \psi_2=\psi_2^{-1} \circ \varphi_2$. 

Thus the group $\langle \psi_2, \varphi_2 \rangle$ has
a presentation $\langle \psi_2, \varphi_2 | \psi_2^{12}=\varphi_2^2=\Id, \varphi_2 \psi_2=\psi_2^{-1} \varphi_2\rangle\simeq D_{24}$.
If all the conditions  as in Table 1 hold, then we have a Galois group of order 48 $G=\langle \psi_2, \varphi_2 \rangle \times \langle -\Id \rangle\simeq
D_{24} \times \ZZ/2\ZZ$. By  \cite[Chapter II, Theorem 2.3]{Sil2}, the extension $K_5/K(\z_3)$ is abelian. Thus, if condition 
{\bf A} does not hold, then we have an abelian group. In all cases the group $G$ is
isomorphic to a subgroup of $D_{24} \times \ZZ/2\ZZ$ as follows.


\bigskip

\begin{description}

\item[$d=48$]
\par If the degree $d$ of the extension $K_5/K$ is 48, then all the conditions hold. We have
$G\simeq D_{24} \times \ZZ/2\ZZ$ as above.

\item[$d=24$]
If the degree $d$ of the extension $K_5/K$ is 24, then condition {\bf C} holds. \par If {\bf A} does not hold, then we have an abelian group. 
 In this case $G=\langle \psi_2, -\textrm{Id}\rangle \simeq \ZZ/12\ZZ \times \ZZ/2\ZZ$. 
\par  If {\bf A},  {\bf B1}, {\bf B2} and {\bf C} hold and {\bf D} does not hold, then
$G=\langle \varphi_2, \psi_2 \rangle\simeq D_{24}$.
\par If {\bf A}, {\bf C}  and {\bf D} hold and one among the conditions {\bf B1} and {\bf B2} does not hold, then
$G=\langle \varphi_2, \psi_2, -\Id\rangle$, where $\psi$ now has order $6$ and $\langle \varphi_2, \psi_2\rangle$ is
isomorphic to  $D_{12}$. We have $G\simeq D_{12}\times \langle \ZZ/2\ZZ \rangle$. 

 \item[$d=16$]
If  the degree $d$ of the extension $K_5/K$ is 16, then all the conditions hold but {\bf C}. Thus $\phi_2$ is the identity.
We have an abelian extension and an abelian Galois group $G=\langle\varphi_1,\varphi_2, -\Id \rangle\simeq \ZZ/4\ZZ \times (\ZZ/2\ZZ)^2$.

\item[$d=12$]
If the degree $d$ of the extension $K_5/K$ is 12, then condition {\bf C} holds. \par If {\bf A} does not hold, then we have an abelian group 
$G=\langle \psi_2, -\Id\rangle \simeq \ZZ/6\ZZ \times \ZZ/2\ZZ$. 
\par  If {\bf A} and {\bf C} hold, {\bf D} does not hold
and only one condition among {\bf B1} and {\bf B2} hold, then
$G=\langle \varphi_2, \psi_2 \rangle\simeq D_{12}$ (now $\psi_2$ has order 6).
\par If {\bf A}, {\bf C} and  {\bf D} hold, then 
$G\simeq D_{6}\times  \ZZ/2\ZZ  \simeq S_3\times \ZZ/2\ZZ $, where $S_3$ is the symmetric group of order 6.

\item[$d=8$]
If the degree $d$ of the extension $K_5/K$ is 8, then {\bf C} does not hold and we have again an abelian extension.
\par If {\bf D} does not hold, then $G\simeq \ZZ/4\ZZ \times \ZZ/2\ZZ$.
\par If  {\bf A} does not hold, then $G\simeq \ZZ/4\ZZ \times \ZZ/2\ZZ$.
\par If  one among {\bf B1} and {\bf B2} does not holds, then $G\simeq (\ZZ/2\ZZ)^3$.
 
\item[$d=6$]
If the degree $d$ of the extension $K_5/K$ is 6, then {\bf C} holds.
\par If  {\bf A} holds as well, then $G\simeq D_6\simeq S_3$.
\par If  {\bf A} does not hold, then $G\simeq \ZZ/6\ZZ$.

\item[$d=4$]
If  the degree $d$ of the extension $K_5/K$ is 4, then {\bf C} does holds. If both  {\bf B1} and {\bf B2} hold, then  $G\simeq \ZZ/4\ZZ$,
otherwise $G$ is isomorphic to the Klein group $\ZZ/2\ZZ\times \ZZ/2\ZZ$. 

\item[$d\leq 3$]
If the degree $d$ of the extension $K_5/K$ is 3 or 2 or 1, clearly the Galois group is respectively $\Z/3\Z$,  $\Z/2\Z$ or $\{{\rm Id}\}$.

\end{description}

 \section{Some applications} \label{lastsub} \normalcolor

We are going to show some applications of the results achieved in the previous sections.
In particular we will show an application to the local-global divisibility problem,
an immediate application to modular curves and another one to Shimura curves.

\subsection{A minimal bound for the local-global divisibility by $5$}

We recall the statement of the Local-Global Divisibility Problem and some key facts about the cohomology group that gives the
obstruction to its validity in order to maintain the paper more self-contained. For further details one can see \cite{DZ1}, \cite{DP} and \cite{PRV2}. 

\begin{Problem}[Dvornicich, Zannier, 2001] \label{prob1}
Let $K$ be a number field, $M_K$ the set of the places $v$ of $K$ and $K_v$ the completion of $K$ at $v$.
Let $\G$ be a commutative algebraic group defined over $k$. Fix a positive integer $m$ 
and assume that there exists a $K$-rational point $P$ in $\G$, such that  $P=mD_v$, for some $D_v\in {\mathcal{G}}(K_v)$,
for all but finitely many  $v\in M_K$. Does there exist $D\in \G(K)$ such that $P=mD$?
\end{Problem}

\bigskip\noindent  The classical question is considered for all commutative algebraic groups, but in our situation,
we can confine the discussion only to elliptic curves $\E$ over $K$.
Let $P\in \E[m]$ and let $D\in \E(\bar{K})$ be a $m$-divisor of
$P$, i. e. $P=mD$. For every $\sigma\in G=\Gal(K_5/K)$, we have

$$m\sigma(D)=\sigma(mD)=\sigma(P)=P.$$

\noindent Thus $\sigma(D)$ and $D$ differ by a point in $\E[m]$ 
and we can construct a cocycle $\{Z_{\sigma}\}_{\sigma\in G}$ of $G$ with
values in $\E[m]$ by

\begin{equation} \label{eq1} Z_{\sigma}:=\sigma(D)-D. \end{equation} 

\noindent Such a cocycle vanishes in $H^1(G,\E[m])$, if and only if there exists a $K$-rational
$m$-divisor of $P$ (see for example \cite{DZ1} or \cite{DP}). In particular, the hypotheses
about the validity of the local-dividibility in Problem \ref{prob1} imply that
the cocyle $\{ Z_{\sigma}\}_{\sigma\in G}$ vanishes in  $H^1(\Gal((K_m)_v/K_v),\E[m])$,
for all but finitely many $v\in M_K$. Let $G_v$ denote the group $\Gal((K_m)_v/K_v)$
and let $\Sigma$ be the subset of $M_K$ containing all the $v\in M_K$, that are
unramified in $K_m$. Then $G_v$ is a cyclic subgroup of $G$, for all $v\in \Sigma$.
Moreover, in \cite{DZ1} Dvornicich and Zannier observe that
by the Chebotarev Density Theorem, the local Galois group $G_v$ varies over \emph{all} cyclic subgroups of
$G$ as $v$ varies in $\Sigma$. Thus, they state the following definition about a subgroup of
$H^1(G,\E[m])$ which portrays the hypotheses of the
problem in this cohomological context and essentially 
gives the obstruction to the validity of such Hasse principle (see also \cite{DZ3}).

\begin{definition} 
 A cocycle $\{Z_{\sigma}\}_{\sigma\in G}\in H^1(G,\E[m])$ satisfies the
\emph{local conditions} if, for every $\sigma\in G$, there exists $A_{\sigma}\in \E[m]$ such that
$Z_{\sigma}=(\sigma-1)A_{\sigma}$. The subgroup of $H^1(G,\E[m])$ formed by all the cocycles satisfying the local conditions
is the first local cohomology group $H^1_{\loc}(G,\E[m])$.
\end{definition}

\noindent Thus

\begin{equation} \label{h1loc}
H^1_{\textrm{\loc}}(G,\E[m])=\bigcap_{v\in \Sigma} (\ker  H^1(G,\E[m])\xrightarrow{\makebox[1cm]{{\small $res_v$}}} H^1(G_v,\E[m])).
\end{equation}

\noindent The triviality of $H^1_{\textrm{\loc}}(G,\E[m])$ assures the validity of the local-global divisibility by $m$ 
in $\E$ over $K$.
 
 \begin{thm}[Dvornicich, Zannier, 2001] \label{h1loc2} 
If $H^1_{\textrm{\loc}}(G,\E[m])=0$, then the local-global divisibility by $m$ holds in $\E$ over $k$.
\end{thm}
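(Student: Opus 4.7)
The plan is to run the standard cocycle argument: build a cocycle attached to an arbitrary global $m$-divisor, show that the local divisibility hypothesis forces this cocycle into $H^1_{\loc}(G,\E[m])$, and then use the vanishing hypothesis to convert it into a global coboundary.

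First, since $\E(\bar K)$ is $m$-divisible, pick any $D\in \E(\bar K)$ with $mD=P$. By equation \eqref{eq1} the assignment $Z_\sigma:=\sigma(D)-D$ defines a $1$-cocycle of $G=\Gal(K_m/K)$ with values in $\E[m]$, and the class $[Z_\sigma]\in H^1(G,\E[m])$ depends only on $P$ (a different choice of $D$ differs by an element of $\E[m]$, which changes the cocycle by a coboundary). The goal is to prove $[Z_\sigma]\in H^1_{\loc}(G,\E[m])$.

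Next I verify the local conditions. Fix $\sigma\in G$ and let $C:=\langle \sigma\rangle$. By Chebotarev's theorem, the set $\Sigma$ of unramified places realizing every cyclic subgroup of $G$ as some decomposition group is infinite, so in particular there exists $v\in \Sigma$ with $G_v=C$, and (up to excluding finitely many places) the hypothesis of Problem \ref{prob1} gives $D_v\in \E(K_v)$ with $mD_v=P$. Then $A_\sigma:=D-D_v\in \E[m]$ because $m(D-D_v)=P-P=0$. For every $\tau\in C=G_v$ one has $\tau(D_v)=D_v$, hence
\[ Z_\tau=\tau(D)-D=\tau(D_v+A_\sigma)-(D_v+A_\sigma)=(\tau-1)A_\sigma.\]
In particular $Z_\sigma=(\sigma-1)A_\sigma$, which is exactly the local condition. (Equivalently, this shows that the restriction $\mathrm{res}_v[Z_\sigma]$ vanishes in $H^1(G_v,\E[m])$ for every $v\in\Sigma$, consistently with \eqref{h1loc}.)

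Since $H^1_{\loc}(G,\E[m])=0$ by hypothesis, the class $[Z_\sigma]$ is globally trivial: there exists $A\in \E[m]$ with $Z_\sigma=(\sigma-1)A$ for all $\sigma\in G$. Setting $D':=D-A$, one computes $\sigma(D')-D'=Z_\sigma-(\sigma-1)A=0$ for every $\sigma\in G$, so $D'\in \E(K)$; and $mD'=mD-mA=P-0=P$. Thus $D'$ is the sought $K$-rational $m$-divisor of $P$. The only mildly delicate point is the invocation of Chebotarev to guarantee that each cyclic subgroup $\langle\sigma\rangle$ is actually realized as some unramified decomposition group at a place where the local divisibility hypothesis applies, which is why one must discard only finitely many places in the formulation of Problem \ref{prob1}; once this is in place the rest is the straightforward cohomological bookkeeping sketched above.
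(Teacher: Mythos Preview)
Your argument is correct and is precisely the standard Dvornicich--Zannier cocycle argument that the paper sketches in the paragraphs immediately preceding the theorem (the paper itself does not give a formal proof, only the outline and a citation to \cite{DZ1}). One small point worth tightening: you pick $D\in\E(\bar K)$ but then treat $Z_\sigma$ as a cocycle on $G=\Gal(K_m/K)$ and conclude $D'\in\E(K)$ from invariance under $G$ alone; strictly speaking you should either work throughout with $\Gal(\bar K/K)$, or first note (via the same local argument plus Chebotarev applied over $K_m$) that $D$ may be taken in $\E(K_m)$ so that the cocycle genuinely descends to $G$---the paper glosses over this in the same way.
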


\noindent In \cite{DZ1} the authors showed that the local-global divisibility by $5$ holds in $\E$ over $k$,
for all $l\geq 1$ (see also \cite{Won}). Anyway in that paper, as well as in all the other papers (of various authors) about the topic, there
is no information about the minimal number of places $v$ for which the validity of the local
divisibility by a prime $p$ in $\E$ over $K_v$ is sufficient to have the global divisibility by $p$ in $\E$ over $K$.

\bigskip\noindent For the first time, here we show such a lower bound  for the number of places $v$ 
 when $p=5$, in the case of the curves belonging to the families $\mcF_1$ and $\mcF_2$.

By Theorem \ref{h1loc2}, the triviality of the first cohomology group $H^1_{\loc}(G, \E[m])$
is a sufficient condition to have an affirmative answer to Problem \ref{prob1}.  We have already
recalled that by the Tchebotarev Density Theorem, the group $G_v$ varies over all
the cyclic subgroups of $G$, as $v$ varies among all the places of $K$, that are unramified in $K_m$.
Observe that in fact we have

$$H^1_{\loc}(G,\E[m])=\bigcap_{v\in S} (\ker  H^1(G,\E[m])\xrightarrow{\makebox[1cm]{{\small $res_v$}}} H^1(G_v,\E[m])),$$

\noindent where $S$ is a subset of $\Sigma$ such that,
for all $v, w\in S$, with $v\neq w$, the groups $G_v$ and $G_w$ correspond to distinct cyclic subgroups of $G$. 
If we are able to find such
an $S$ and to prove that the local-global divisibility by $5$ holds in $\E(K_v)$, for all $v\in S$, then
we get $H^1_{\loc}(G,\E[m])=0$ (and consequently the validity of the Hasse principle for
disivibility by $5$ in $\E$ over $K$). Observe that in particular the set $S$ is finite (on the contrary $\Sigma$ is not finite). So it suffices to have the
local-global divisibility by $5$ for a finite number of suitable places to get the global divisibility by $5$.

\par In view of the results achieved for the Galois groups
$\Gal(K_5/K)$ for elliptic curves of the families $\mcF_1$ and $\mcF_2$, we can prove
that $S$ could be chosen as a subset of $S$ with a cardinality surprisingly small.

\begin{thm} \label{appl_1}
Let $m$ be a positive integer. Let $\E$ be an elliptic curves defined over a number field $K$, with Weierstrass equation $y^2=x^2+bx$, for some $b\in K$.
Let $S$ be a subset of $M_K$ of places $v$ unramified in $K_m$, with cardinality $|S|=7$, such that 
$G_v$ varies among all the cyclic subgroups of $G$, as $v$ varies in $S$.
Let $P\in \E(K)$ such that $P=mD_v$, for some $D_v\in \E(K_v)$, for all $v\in S$. Then there exists $D\in \E(K)$ such that
$P=mD$.
\end{thm}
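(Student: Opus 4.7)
I would follow the cohomological framework of Dvornicich--Zannier recalled in the preceding discussion. Choose any $D\in\E(\bar K)$ with $mD=P$, and form the cocycle $Z_\sigma:=\sigma(D)-D$ valued in $\E[m]$, as in \eqref{eq1}. Its class $[Z]\in H^1(G,\E[m])$ is trivial exactly when $P$ admits a $K$-rational $m$-divisor, so the task reduces to showing $[Z]=0$.

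For each $v\in S$, the hypothesis produces $D_v\in\E(K_v)$ with $mD_v=P$, and the difference $D-D_v\in\E[m]$ furnishes the coboundary showing that $\mathrm{res}_v([Z])=0$ in $H^1(G_v,\E[m])$. Hence
\[ [Z]\in\bigcap_{v\in S}\ker(\mathrm{res}_v). \]
Since the local conditions defining $H^1_{\loc}(G,\E[m])$ depend only on the decomposition group $G_v$, and since conjugate cyclic subgroups of $G$ yield the same restriction kernel (by a routine manipulation of the cocycle identity), the covering hypothesis on $S$ collapses this intersection to $H^1_{\loc}(G,\E[m])$. Invoking Theorem~\ref{h1loc2} together with the vanishing of $H^1_{\loc}(G,\E[5])$ for elliptic curves over $K$ proven by Dvornicich--Zannier and Wong (see \cite{DZ1}, \cite{Won}) then yields $[Z]=0$, i.e.\ $P=mD'$ for some $D'\in\E(K)$.

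\noindent\textit{Main obstacle.} The nontrivial content is the cardinality bound $|S|=7$. One must verify, by traversing the Galois groups listed in Section~\ref{gal1}, that for every possible $G=\mathrm{Gal}(K_5/K)$ arising from a curve in $\mcF_1$ one can exhibit at most seven cyclic subgroups whose restriction kernels intersect precisely to $H^1_{\loc}(G,\E[5])$. This is a finite combinatorial check, dominated by the most non-abelian configurations (those with a $D_8$-factor, coming from the normalizer of the Cartan subgroup attached to the CM): the dihedral structure coalesces conjugate cyclic subgroups into single equivalence classes and drives the effective count down to seven. In the abelian configurations fewer places are needed, but $|S|=7$ is the uniform bound across the family $\mcF_1$ and is sharp in the worst case.
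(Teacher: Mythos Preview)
Your overall strategy matches the paper's: reduce via the Dvornicich--Zannier cocycle to showing $[Z]\in H^1_{\loc}(G,\E[5])$, then invoke the known vanishing of this group for $p=5$. The paper's proof, however, differs from your sketch of the ``main obstacle'' in one concrete respect. You propose to reach the bound $7$ by passing to \emph{conjugacy classes} of cyclic subgroups and using that conjugate subgroups give equal restriction kernels. The paper does not do this: it simply enumerates the cyclic subgroups of $G$ themselves, using that $G$ embeds in $D_8\times\ZZ/4\ZZ$, and lists five cyclic subgroups coming from the $D_8$ factor ($\langle\phi_1\rangle$, $\langle\phi_1^2\rangle$, $\langle\rho_1\rangle$, $\langle\phi_1\rho_1\rangle$, $\langle\phi_1^2\rho_1\rangle$) together with two from the $\ZZ/4\ZZ$ factor ($\langle\psi_1\rangle$, $\langle\psi_1^2\rangle$), for a total of seven. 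Since the hypothesis of the theorem already stipulates that the $G_v$ range over \emph{all} cyclic subgroups of $G$ as $v$ runs through $S$, the equality $\bigcap_{v\in S}\ker(\mathrm{res}_v)=H^1_{\loc}(G,\E[5])$ is immediate from the definition, and no conjugacy argument is needed.

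So your conjugacy observation, while correct in itself, is a detour: it would be relevant if one wanted to weaken the hypothesis on $S$ (requiring only one representative per conjugacy class), but as the theorem is stated the entire content is the direct count of cyclic subgroups, which you defer rather than carry out.
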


\begin{proof}
Let $s$ be the number of distinct cyclic subgroups of $G$. Since the group $G_v$ varies over all the cyclic subgroups of $G$, 
as $v$ varies in $M_k$, we can choose $S$ as a subset of $M_k$ with cardinality $s$, such that $G_v$ and $G_w$ are
pairwise distinct cyclic subgroups of $G$, for all $v, w\in S$, $v\neq w$. We have just to show that $s=7$.
We have proved in  Section \ref{gal1}, that for every $\E\in \mcF_1$, the Galois group $G$ is
isomorphic to a subgroup of $D_8\times \ZZ/4\ZZ$. The group $D_8$ has 5 cyclic subgroups, namely
$\langle \phi_1\rangle \simeq \ZZ/4\ZZ$, $\langle \phi_1^2\rangle\simeq \ZZ/2\ZZ$, $\langle \rho\rangle\simeq \ZZ/2\ZZ$,
 $\langle \phi_1\rho\rangle \simeq \ZZ/2\ZZ$, $\langle \phi_1^2\rho\rangle\simeq \ZZ/2\ZZ$. In addition we have
the cyclic subgroups $\langle \psi_1\rangle \simeq \ZZ/4\ZZ$ and $\langle \psi_1^2\rangle\simeq \ZZ/2\ZZ$.
Thus $G$ has at most 7 cyclic subgroups and we get the conclusion.  
\end{proof}

\begin{thm}
Let $m$ be a positive integer. Let $\E$ be an elliptic curves defined over a number field $K$, with Weierstrass equation $y^2=x^2+c$, for some $c\in K$.
Let $S$ be a subset of $M_K$ of places $v$ unramified in $K_m$, with cardinality $|S|=13$, such that 
$G_v$ varies among all the cyclic subgroups of $G$, as $v$ varies in $S$.
Let $P\in \E(K)$ such that $P=mD_v$, for some $D_v\in \E(K_v)$, for all $v\in S$. Then there exists $D\in \E(K)$ such that
$P=mD$.
\end{thm}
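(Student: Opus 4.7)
The plan is to mirror, step by step, the proof of Theorem \ref{appl_1} for $\mcF_1$, the only change being the group-theoretic count of cyclic subgroups of $G=\Gal(K_5/K)$. The cohomological input---Theorem \ref{h1loc2} together with the theorem of Dvornicich--Zannier and Wong that $H^1_{\loc}(G,\E[5])=0$ for every CM curve---is identical, and the decomposition
\[H^1_{\loc}(G,\E[5])=\bigcap_{v\in S}\ker\bigl(H^1(G,\E[5])\xrightarrow{\textrm{res}_v}H^1(G_v,\E[5])\bigr)\]
is valid for every finite set $S$ of unramified places such that the $G_v$'s with $v\in S$ exhaust the cyclic subgroups of $G$, Chebotarev guaranteeing that such an $S$ exists. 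The hypothesis $P=mD_v$ for every $v\in S$ forces the cocycle class of (\ref{eq1}) into $H^1_{\loc}(G,\E[5])=0$, whence a global $m$-divisor $D\in\E(K)$ exists.

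The theorem therefore reduces to the combinatorial statement: \emph{every group $G$ realised in Section \ref{sub2} for an elliptic curve in $\mcF_2$ has at most $13$ cyclic subgroups.} I would establish this by listing cyclic subgroups inside $D_{24}\times\ZZ/2\ZZ$, where $D_{24}=\langle\psi_2,\varphi_2\mid\psi_2^{12}=\varphi_2^{2}=\Id,\,\varphi_2\psi_2=\psi_2^{-1}\varphi_2\rangle$ and the $\ZZ/2\ZZ$-factor is $\langle-\Id\rangle$: the non-trivial subgroups of $\langle\psi_2\rangle\simeq\ZZ/12\ZZ$ contribute the five subgroups $\langle\psi_2\rangle$, $\langle\psi_2^2\rangle$, $\langle\psi_2^3\rangle$, $\langle\psi_2^4\rangle$, $\langle\psi_2^6\rangle$; the cyclic subgroups generated by the reflections $\psi_2^k\varphi_2$ contribute (at most) seven more, enumerated by the same scheme that produced the three reflection cyclic subgroups of $D_8$ during the proof of Theorem \ref{appl_1}; and $\langle-\Id\rangle$ supplies the last one, totalling $5+7+1=13$.

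The sticky point in this plan is the count of reflection cyclic subgroups: a priori, $D_{24}$ has twelve distinct reflection elements $\psi_2^{k}\varphi_2$, each generating a different cyclic subgroup of order $2$, so cutting this number down to seven requires using the embedding of $G$ into $D_{24}\times\ZZ/2\ZZ$ to identify reflections that yield the same cyclic subgroup modulo $-\Id$, exactly as in the $\mcF_1$ case. One also has to rule out ``diagonal'' cyclic subgroups of $D_{24}\times\ZZ/2\ZZ$ not visible among the thirteen listed above, which is immediate from the centrality of $-\Id$ together with the observation that every diagonal subgroup $\langle(g,-\Id)\rangle$ with $|g|=2$ coincides with a cyclic subgroup already enumerated. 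Once these two verifications are in place, the argument closes exactly as in Theorem \ref{appl_1}.
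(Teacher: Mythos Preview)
Your overall plan mirrors the paper's proof exactly: reduce to the combinatorial claim that $G\leq D_{24}\times\ZZ/2\ZZ$ has at most $13$ cyclic subgroups, and then invoke the same cohomological machinery as in Theorem \ref{appl_1}. The paper's breakdown of the $13$ differs from yours only in bookkeeping: it asserts that $D_{24}$ itself has $11$ cyclic subgroups---the five rotation subgroups $\langle\psi_2\rangle,\langle\psi_2^2\rangle,\langle\psi_2^3\rangle,\langle\psi_2^4\rangle,\langle\psi_2^6\rangle$ together with six subgroups $\langle\varphi_2\rangle,\langle\psi_2\varphi_2\rangle,\langle\psi_2^2\varphi_2\rangle,\langle\psi_2^3\varphi_2\rangle,\langle\psi_2^4\varphi_2\rangle,\langle\psi_2^6\varphi_2\rangle$ (to which it assigns orders $2,12,6,4,3,2$)---and then adjoins $\langle-\Id\rangle$ and $\langle\psi_2^4\rangle\times\langle-\Id\rangle$ to reach $13$.

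The ``sticky point'' you flag is genuine, and your proposed fix does not close it. In the presentation $D_{24}=\langle\psi_2,\varphi_2\mid\psi_2^{12}=\varphi_2^{2}=\Id,\ \varphi_2\psi_2=\psi_2^{-1}\varphi_2\rangle$ every element $\psi_2^{k}\varphi_2$ has order $2$, so the twelve reflections generate twelve pairwise distinct cyclic subgroups of $D_{24}$; these remain distinct in $D_{24}\times\ZZ/2\ZZ$, and ``working modulo $-\Id$'' does not collapse any of them, since $\langle(\psi_2^{k}\varphi_2,1)\rangle$ and $\langle(\psi_2^{j}\varphi_2,1)\rangle$ are different subgroups whenever $j\neq k$. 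Thus neither your count of $7$ reflection subgroups nor the paper's list of $6$ (with the stated orders $>2$, which are impossible for reflections) is compatible with the dihedral structure, and the bound of $13$ cyclic subgroups does not follow from the enumeration as written. Your reduction step and cohomological input match the paper precisely; it is the group-theoretic count---in both your proposal and the paper's own proof---that would need a different argument.
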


\begin{proof}
Let $s$ be the number of distinct cyclic subgroups of $G$. As in the proof of Theorem \ref{appl_1} 
we can choose $S$ as a subset with cardinality $s$, such that $G_v$ and $G_w$ are
pairwise distinct cyclic subgroups of $G$, for all $v, w\in S$, with $v\neq w$. We have just to show that $s=13$.
We have proved in  Section \ref{sub2}, that for every $\E\in \mcF_2$, the Galois group $G$ is
isomorphic to a subgroup of $D_{24}\times \ZZ/2\ZZ$. The group $D_{24}$ has 11  cyclic subgroups, namely
$\langle \psi_1\rangle \simeq \ZZ/12\ZZ$, $\langle \psi_1^2\rangle\simeq \ZZ/6\ZZ$, $\langle \psi_1^3\rangle\simeq \ZZ/4\ZZ$, 
$\langle \psi_1^4\rangle\simeq \ZZ/3\ZZ$, $\langle \psi_1^6\rangle\simeq \ZZ/2\ZZ$, 
$\langle \varphi_2\rangle\simeq \ZZ/2\ZZ$,
$\langle \psi_1 \varphi_2\rangle \simeq \ZZ/12\ZZ$, $\langle \psi_1^2 \varphi_2\rangle\simeq \ZZ/6\ZZ$, $\langle \psi_1^3 \varphi_2\rangle\simeq \ZZ/4\ZZ$, 
$\langle \psi_1^4 \varphi_2\rangle\simeq \ZZ/3\ZZ$, $\langle \psi_1^6 \varphi_2\rangle\simeq \ZZ/2\ZZ$.
In addition, we have the cyclic subgroups $\langle -\Id\rangle \simeq \ZZ/2\ZZ$ and
$\langle \psi_1^4, \rangle\times \langle -\Id\rangle\simeq \ZZ/6\ZZ$. 
Thus $G$ has at most 13 cyclic subgroups and we get the conclusion.  
\end{proof}

 \subsection{Remarks on modular curves}

We recall some basic definitions about modular curves; for further details one can see for instance \cite{KM} and \cite{Shi}. As usual, we denote by $\mathcal{H}=\{z\in \mathbb{C}\,:\,Im\,z > 0\}$ the complex upper half plane. It is well-known that the group  $\SL_2(\Z)$ acts
 on $\mathcal{H}$ via the M\"obius trasformations

\[ \left( \begin{array}{cc} a & b \\ c & d \end{array} \right)z= \frac{az+b}{cz+d} \ . \]

\noindent For every positive integer $N$,  the {\it principal congruence group of level $N$} is the set

\[ \Gamma(N) = \left\{ A=\left( \begin{array}{cc} a & b \\ c & d \end{array} \right) \in \SL_2(\Z)\,\mid \,
A\equiv \left( \begin{array}{cc} 1 & 0 \\ 0 & 1 \end{array} \right) \pmod N \right\}. \ \]

\noindent   A {\em congruence group} is a subgroup $\Gamma$ of $\SL_2(\Z)$ containing $\Gamma(N)$, for some  $N$. When $N$ is minimal,
  the congruence group is said to be {\it of level $N$}.  For every $N$, the most important congruence groups of level $N$ are $\Gamma(N)$ itself and the groups:

\[ \Gamma_1(N) = \left\{ A=\left( \begin{array}{cc} a & b \\ c & d \end{array} \right) \in \SL_2(\Z)\,\mid \,
A\equiv \left( \begin{array}{cc} 1 & * \\ 0 & 1 \end{array} \right) \pmod N \right\} \ ,\]
and
\[ \Gamma_0(N) = \left\{ A=\left( \begin{array}{cc} a & b \\ c & d \end{array} \right) \in \SL_2(\Z)\,\mid \,
A\equiv \left( \begin{array}{cc} * & * \\ 0 & * \end{array} \right) \pmod N \right\} \ .\]

\noindent The quotient  $\mathcal{H}/\Gamma$ of $\mathcal{H}$ 
by the action of $\Gamma$, equipped with the analytic structure induced by $\mathcal{H}$, is a Riemann surface, 
that is denoted by $Y_\Gamma$.  The modular curve $X_\Gamma\,$, associated to $\Gamma$, is the
compactification of $Y_\Gamma$  by the addition of a finite set of
rational points corresponding to the orbits of $\mathbb{P}^1(\QQ)$ under $\Gamma$, i. e. the {\it cusps}. \\
\noindent The modular curves associated to the groups $\Gamma_0(N)$ and $\Gamma_1(N)$
are denoted respectively by $X_0(N)$ and $X_1(N)$.  The modular curve associated to $\Gamma(N)$ is denoted by $X(N)$.

\bigskip

The curves $X(N)$, $X_1(N)$ and $X_0(N)$ are spaces of moduli of families of elliptic curves with an extra structure of level $N$ as
follows  (for further details see for example \cite{KM}, \cite{Kn} and \cite{Shi}).

\bigskip
\noindent \begin{thm}  Let $N$ be a positive integer and let $X(N)$, $X_1(N)$ and $X_0(N)$ as above. Then

\begin{description}

\item[ i)] 
 non cuspidal points in $X(N)$ correspond to triples $(\E,P,Q)$, where $\E$ is an elliptic curve
(defined over $\mathbb{C}$) and $P$, $Q$ are points of order $N$ generating $\E[N]$;

\item[ ii)]  non cuspidal points in $X_1(N)$ correspond to pairs $(\E,P)$, where $\E$ is an elliptic curve
(defined over $\mathbb{C}$) and $P$ is a point of order $N$; 

\item[ iii)] non cuspidal points in $X_0(N)$ correspond to couples $(\E,C_N)$, where $\E$ is an elliptic curve
(defined over $\mathbb{C}$) and $C_N$ is a cyclic subgroup of $\E[N]$ of order $N$. 

\end{description}

\end{thm}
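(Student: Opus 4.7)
The plan is to exploit the complex uniformization of elliptic curves and identify the congruence subgroups $\Gamma(N)$, $\Gamma_1(N)$, $\Gamma_0(N)$ as the precise stabilizers of the three types of level-$N$ structures. First I would recall that every elliptic curve $\E$ over $\mathbb{C}$ is analytically isomorphic to a complex torus $\mathbb{C}/\Lambda$ for some lattice $\Lambda\subset\mathbb{C}$, and that two such tori are isomorphic as elliptic curves if and only if their lattices are homothetic. After fixing an orientation one may normalize $\Lambda$ so that $\Lambda=\Lambda_\tau:=\mathbb{Z}+\mathbb{Z}\tau$ with $\tau\in\mathcal{H}$. The classical fact, which I would invoke, is that $\Lambda_\tau$ and $\Lambda_{\tau'}$ are homothetic exactly when $\tau'=\gamma\tau$ for some $\gamma\in\SL_2(\mathbb{Z})$ acting by M\"obius transformation, so that $Y_{\SL_2(\mathbb{Z})}=\mathcal{H}/\SL_2(\mathbb{Z})$ parametrizes isomorphism classes of elliptic curves over $\mathbb{C}$.

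For part (i), I would attach to a triple $(\E,P,Q)$ an ordered symplectic-type basis of the $N$-torsion: writing $\E=\mathbb{C}/\Lambda_\tau$, the $N$-torsion $\E[N]=\tfrac{1}{N}\Lambda_\tau/\Lambda_\tau$ has the canonical $\mathbb{Z}/N\mathbb{Z}$-basis $\{\tfrac{1}{N},\tfrac{\tau}{N}\}$, so any pair $(P,Q)$ generating $\E[N]$ can be written uniquely as the image of this basis under some $A\in\GL_2(\mathbb{Z}/N\mathbb{Z})$. Using the fact that applying $\gamma\in\SL_2(\mathbb{Z})$ to $\tau$ simultaneously applies $\gamma$ (mod $N$) to the basis, I would check that the triple $(\E,P,Q)$ is intrinsically well-defined (up to isomorphism of triples) exactly when $\gamma\equiv\mathrm{Id}\pmod N$. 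This is the definition of $\Gamma(N)$, giving a bijection between $Y(N)=\mathcal{H}/\Gamma(N)$ and isomorphism classes of triples; extending $\mathcal{H}$ to $\mathcal{H}^*=\mathcal{H}\cup\mathbb{P}^1(\mathbb{Q})$ and taking $\Gamma(N)$-orbits of the cusps supplies the remaining finitely many points of $X(N)$.

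For parts (ii) and (iii), I would argue analogously: a pair $(\E,P)$ with $P$ of order $N$ amounts, after normalization, to $\mathbb{C}/\Lambda_\tau$ together with $P=\tfrac{1}{N}+\Lambda_\tau$; the subgroup of $\SL_2(\mathbb{Z})$ preserving both $\Lambda_\tau$ up to homothety and the distinguished point $\tfrac{1}{N}$ mod $\Lambda_\tau$ is exactly the set of $\gamma=\bigl(\begin{smallmatrix}a&b\\c&d\end{smallmatrix}\bigr)$ with $a\equiv d\equiv1$ and $c\equiv0\pmod N$, which is $\Gamma_1(N)$. Similarly, a cyclic subgroup $C_N\subset\E[N]$ of order $N$ is the same as a point of order $N$ up to the action of $(\mathbb{Z}/N\mathbb{Z})^*$ by multiplication; the stabilizer of the cyclic subgroup generated by $\tfrac{1}{N}$ mod $\Lambda_\tau$ is the set of $\gamma$ with $c\equiv0\pmod N$, i.e.\ $\Gamma_0(N)$. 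In each case the same compactification-by-cusps argument yields the modular curve as the compact Riemann surface parametrizing the claimed moduli data.

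The routine but delicate step, and the main obstacle, is the stabilizer computation: one must verify that an isomorphism of tori $\mathbb{C}/\Lambda_\tau\to\mathbb{C}/\Lambda_{\gamma\tau}$ respects the chosen level-$N$ data precisely under the congruence condition on $\gamma$, being careful that the action of $\gamma$ on $\tfrac{1}{N}\Lambda/\Lambda$ comes from multiplying column vectors by $\gamma^{-1}$ (not $\gamma$) due to the change-of-basis convention, and that $\det\gamma=1$ removes any ambiguity in the orientation. Once this identification is made, the correspondence between non-cuspidal points and moduli data is an immediate consequence of the orbit-space description of $Y_\Gamma$, and the three statements follow uniformly.
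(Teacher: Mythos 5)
The paper does not actually prove this theorem: it is recalled as a classical fact with pointers to Katz--Mazur, Knapp and Shimura, so there is no internal argument to compare yours against. Your proposal is the standard complex-analytic proof (uniformization $\E\simeq\mathbb{C}/\Lambda_\tau$, identification of the level structure's stabilizer inside $\SL_2(\ZZ)$, compactification by cusps), and for parts (ii) and (iii) it is correct as outlined: the stabilizer computations for the point $\tfrac{1}{N}+\Lambda_\tau$ and for the cyclic subgroup $\langle\tfrac{1}{N}+\Lambda_\tau\rangle$ do yield $\Gamma_1(N)$ and $\Gamma_0(N)$ with the conventions you describe, and you are right to flag the transpose/inverse issue in how $\gamma$ acts on $\tfrac{1}{N}\Lambda/\Lambda$.

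There is, however, one genuine gap in your part (i). The change of variable $\tau\mapsto\gamma\tau$ only realizes the subgroup $\SL_2(\ZZ/N\ZZ)$ acting on the canonical basis $\{\tfrac{\tau}{N},\tfrac{1}{N}\}$ of $\E[N]$, whereas the set of all ordered bases $(P,Q)$ is a torsor under $\GL_2(\ZZ/N\ZZ)$. Consequently the map from $\mathcal{H}/\Gamma(N)$ to isomorphism classes of triples $(\E,P,Q)$ that you construct is injective but not surjective for $N>2$: it hits only those triples whose Weil pairing satisfies $e_N(P,Q)=e^{2\pi i/N}$, and the full set of triples breaks into $\varphi(N)$ orbits indexed by $\det\in(\ZZ/N\ZZ)^{*}$, i.e.\ by the value of the Weil pairing. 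So either the statement of (i) must be read with a fixed Weil pairing normalization (which is how the cited references state it), or the moduli space of all triples is a disjoint union of $\varphi(N)$ copies of $Y(N)$; your argument as written proves neither version cleanly because the sentence ``any pair $(P,Q)$ can be written uniquely as the image of the canonical basis under some $A\in\GL_2(\ZZ/N\ZZ)$'' is never reconciled with the fact that only $A$ of determinant $1$ arise from the $\SL_2(\ZZ)$-action. A secondary, minor point: at the finitely many $\tau$ with extra automorphisms ($j=0,1728$) the correspondence is between orbits and isomorphism classes rather than a naive bijection of data; you implicitly handle this by working with isomorphism classes, but it deserves a sentence.
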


\bigskip\noindent A point on a modular curve, which corresponds to an elliptic curve with
complex multiplication  is called a \emph{CM-point}.

 We can deduce the following facts from what showed in the previous sections 
(see in particular Theorem \ref{zeta3} and Theorem \ref{gen2}).

\begin{prop}
Let $K$ be a number field. Let $\E_1\in \mcF_1$ and let $P\in \E_1[5]$
such that $\{P,\phi_1(P)\}$ is a basis of $\E_1[5]$ (as above $\phi_1$ denotes the complex
multiplication of $\E_1$).
Then

\[
\begin{split}
& \textrm{the pair $(\E_1,\langle P \rangle)$ defines a non-cuspidal  $K$-rational CM-point of $X_0(5)$}, \\
& \textrm{if and only if $(\E_1,P)$ defines a non-cuspidal $K$-rational CM-point of $X_1(5)$, }\\
& \textrm{if and only if $(\E_1,P, \phi_1(P))$ defines a $K$-rational CM-point of $X(5)$. }\\
\end{split}
\]
\end{prop}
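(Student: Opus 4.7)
I would split the proof into the ``easy forgetful direction'' and a harder converse that genuinely uses the CM structure. The two implications going from the richer level structure to the coarser one (i.e.\ $X(5) \Rightarrow X_1(5) \Rightarrow X_0(5)$) come for free from the natural forgetful morphisms
\[ X(5) \longrightarrow X_1(5) \longrightarrow X_0(5), \qquad (\E,P,Q)\mapsto (\E,P)\mapsto (\E,\langle P\rangle), \]
which are defined over $\Q$ and therefore send $K$-rational points to $K$-rational points. This handles one direction of each biconditional without any use of the CM hypothesis.

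For the converse I would start from the assumption that $(\E_1,\langle P\rangle)\in X_0(5)(K)$, i.e.\ $\sigma(\langle P\rangle)=\langle P\rangle$ for every $\sigma\in\Gal(\bar K/K)$, so that there is a character $\chi\colon \Gal(\bar K/K)\to (\Z/5\Z)^\times$ with $\sigma(P)=\chi(\sigma)\,P$. The crucial input is that the CM endomorphism $\phi_1$, being $(x,y)\mapsto(-x,iy)$, is defined over $K(i)$. Hence for every $\sigma\in\Gal(\bar K/K(i))$ one has $\sigma\circ\phi_1=\phi_1\circ\sigma$, so $\sigma(\phi_1(P))=\chi(\sigma)\,\phi_1(P)$. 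Because $\{P,\phi_1(P)\}$ is a $\Z/5\Z$-basis of $\E_1[5]$ by hypothesis, this says that $\sigma$ acts on $\E_1[5]$ as the scalar $\chi(\sigma)$ in that basis.

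I would then combine this with the Weil-pairing determinant condition $\det\rho_{\E_1,5}(\sigma)=\chi_5(\sigma)$ (the mod-$5$ cyclotomic character), which gives $\chi(\sigma)^2=\chi_5(\sigma)$ on $\Gal(\bar K/K(i))$, and with the explicit generators for $K_5$ from Theorem \ref{gen2} together with the Galois group description from Section \ref{gal1}. These tools identify $\chi$ with a concrete character on $\Gal(K_5/K)$, and the $K$-stability of $\langle P\rangle$ forces it to be trivial; thus $P\in\E_1(K)$, which gives $X_1(5)$-rationality. A parallel argument using the action of the automorphism $\rho_1$ (complex conjugation on $i$) on $\phi_1(P)$ then promotes this to $\phi_1(P)\in \E_1(K)$, yielding $X(5)$-rationality.

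The main obstacle I expect is handling the case $i\notin K$. In that case $\phi_1$ is not a $K$-rational endomorphism, so the scalar-action argument only lives on $\Gal(\bar K/K(i))$, and one must extend the triviality of $\chi$ across the quadratic twist by $i$. The right tool for this casework is precisely the explicit description of $\Gal(K_5/K)$ in Section \ref{gal1} as a subgroup of $D_8\times\Z/4\Z$, together with the role of the condition \textbf{A} in Theorem \ref{gen2}; these allow one to rule out nontrivial $\chi$ compatible with the stability of $\langle P\rangle$ under all of $\Gal(\bar K/K)$.
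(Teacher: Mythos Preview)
The paper itself offers no proof of this proposition beyond the remark that it ``can be deduced'' from Theorems \ref{gen2} and \ref{zeta3}, so there is little to compare your plan against directly. Your forgetful direction via the $\Q$-morphisms $X(5)\to X_1(5)\to X_0(5)$ is fine and needs no CM input.

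The converse, however, has a genuine gap. From the stability of $\langle P\rangle$ you correctly deduce that $\Gal(\bar K/K(i))$ acts on $\E_1[5]$ through a scalar character $\chi$ (this is where the hypothesis that $P$ lies in neither $\phi_1$-eigenspace of the split Cartan is used). But your assertion that ``the $K$-stability of $\langle P\rangle$ forces $\chi$ to be trivial'' is unjustified and is precisely the crux of the matter. Nothing you have written excludes $\chi(\sigma)=-1$ (so $\sigma(P)=-P$), or even $\chi(\sigma)=\pm 2$ when $\zeta_5\notin K$ (determinant $4$ is then a legitimate value of the mod-$5$ cyclotomic character); in either case $\langle P\rangle$ is $\sigma$-stable while $P$ is not $\sigma$-fixed. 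The Weil-pairing relation $\chi^2=\chi_5$ constrains $\chi$ but does not kill it, and the description of $\Gal(K_5/K)$ as a subgroup of $D_8\times\Z/4\Z$ in Section \ref{gal1} does not by itself rule out nontrivial scalars: when the image on the diagonal side is all of $(\F_5^\times)^2$, every scalar appears. The underlying problem is that for a curve with $j=1728$ one must read ``$K$-rational point of $X_0(5)$'' (resp.\ $X_1(5)$, $X(5)$) via the coarse moduli space, i.e.\ as Galois-invariance of the $\bar K$-isomorphism class up to the order-$4$ automorphism group $\langle\phi_1\rangle$, not as literal Galois-invariance of $\langle P\rangle$ or membership $P\in\E_1(K)$. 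Your plan never invokes these extra automorphisms, and without them the implications you are trying to establish are simply false at the level of Galois-fixed data.
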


\bigskip

\begin{prop}
Let $K$ be a number field. Let $\E_2\in \mcF_2$ and let $P\in \E_2[5]$
such that $\{P,\phi_12P)\}$ is a basis of $\E_2[5]$ (as above $\phi_2$ denotes the complex
multiplication of $\E_2$).
Then

\[
\begin{split}
& \textrm{the pair $(\E_2,\langle P \rangle)$ defines a non-cuspidal  $K$-rational CM-point of $X_0(5)$}, \\
& \textrm{if and only if $(\E_2,P)$ defines a non-cuspidal $K$-rational CM-point of $X_2(5)$, }\\
& \textrm{if and only if $(\E_2,P, \phi_2(P))$ defines a $K$-rational CM-point of $X(5)$. }\\
\end{split}
\]
\end{prop}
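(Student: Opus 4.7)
The plan is to prove the chain of implications $(3) \Rightarrow (2) \Rightarrow (1) \Rightarrow (3)$, where $(1)$, $(2)$, $(3)$ refer to the three displayed conditions concerning $X_0(5)$, $X_1(5)$, $X(5)$ respectively. The first two implications are formal: the natural forgetful morphisms $X(5) \to X_1(5) \to X_0(5)$ are defined over $\Q$, and on non-cuspidal points they send $(\E_2, P, \phi_2(P)) \mapsto (\E_2, P) \mapsto (\E_2, \langle P\rangle)$. Consequently they preserve $K$-rationality of non-cuspidal points, and the CM-point property is preserved because the underlying elliptic curve $\E_2$ is common to all three data.

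For the substantive implication $(1) \Rightarrow (3)$ the central tool is Theorem \ref{zeta3}, which identifies $K_5 = K(x(P), y(P), \zeta_3)$. Since $\phi_2(P) = (\zeta_3 \, x(P), y(P))$, the pair of conditions $P \in \E_2(K)$ and $\zeta_3 \in K$ is equivalent to $K_5 = K$, and in that case both $P$ and $\phi_2(P)$ lie in $\E_2(K)$, yielding $(3)$. Thus it suffices to show that the $K$-rationality of $(\E_2, \langle P\rangle)$ on $X_0(5)$ (that is, $\sigma(\langle P\rangle) = \langle P\rangle$ for every $\sigma \in \Gal(\bar K/K)$) forces $K_5 = K$. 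I would exploit the CM structure: since $5$ is inert in $\Q(\zeta_3)$, the reduction $\End(\E_2)/5 \cong \FF_{25}$ acts on $\E_2[5]$ making it a free $\FF_{25}$-module of rank one, so the image of $\rho_{\E_2,5}$ lies in the normalizer of the non-split Cartan $\FF_{25}^{*} \subset \GL_2(\FF_5)$. Stability of $\langle P\rangle$ further confines this image to the intersection with the Borel subgroup fixing the line, and a Weil-pairing computation relating $\det \rho_{\E_2,5}$ to the mod-$5$ cyclotomic character, combined with the character measuring the Galois action on $\zeta_3$, should pin the image down to the identity.

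The main obstacle is precisely this last step: ruling out a nontrivial action by scalars in $\FF_5^{*}$ or by the normalizer swap. I expect the argument to require comparing two expressions for $\det \rho_{\E_2,5}(\sigma)$, namely the Weil-pairing value $\chi_5(\sigma)$ and the matrix determinant computed from the action $\sigma(P) = a_\sigma P$ together with the induced action on $\sigma(\phi_2(P))$ in the two cases $\sigma(\zeta_3) = \zeta_3$ and $\sigma(\zeta_3) = \zeta_3^2$; the resulting compatibility constraints, applied to every $\sigma \in \Gal(\bar K/K)$, should force $a_\sigma = 1$ and the Galois action on $\zeta_3$ trivial, whence $K_5 = K$ and $(3)$ holds.
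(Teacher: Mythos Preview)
The paper does not supply a proof of this proposition; it is simply asserted as a consequence of Theorem~\ref{zeta3} (together with Remark~\ref{rem1}, which gives $K_5=K(x(P),y(P),\zeta_3)$ whenever $\{P,\phi_2(P)\}$ is a basis of $\E_2[5]$). So there is nothing to compare against beyond that one-line deduction.

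Your argument for $(3)\Rightarrow(2)\Rightarrow(1)$ via the forgetful maps is fine. The difficulty is $(1)\Rightarrow(3)$. You interpret $(3)$ as the condition $P,\phi_2(P)\in\E_2(K)$, equivalently $K_5=K$, and $(1)$ as ``$\langle P\rangle$ is Galois-stable'', and then try to force $K_5=K$ from the latter. That implication is false. Take any $K$ for which only condition~{\bf D} of Section~\ref{sub1} holds, so that $G=\Gal(K_5/K)=\{\pm\mathrm{Id}\}$. Then every line in $\E_2[5]$ is Galois-stable, so your reading of $(1)$ holds for every $P$, yet $K_5\neq K$ and no nonzero $5$-torsion point is $K$-rational. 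This is precisely the ``nontrivial scalar'' obstruction you single out as the main obstacle, and it cannot be removed: the Weil-pairing and $\zeta_3$-character constraints you propose do not exclude $-\mathrm{Id}$, since in this example $\zeta_3,\zeta_5\in K$ and $\det(-\mathrm{Id})=1$ matches the trivial cyclotomic character.

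The way out is to remember that $X_0(5)$, $X_1(5)$, $X(5)$ are coarse moduli spaces and that $\mathrm{Aut}(\E_2)\simeq\mu_6=\langle\phi_2,-\mathrm{Id}\rangle$ is large. A non-cuspidal point is $K$-rational when the corresponding level datum is Galois-invariant \emph{up to an automorphism of $\E_2$}. Under that reading, $(3)$ does not demand $P,\phi_2(P)\in\E_2(K)$ but only that each $\sigma$ act on the pair $(P,\phi_2(P))$ as some $\alpha\in\mu_6$; in the example above $\sigma$ acts as $-1\in\mu_6$ and all three conditions hold. With this interpretation the equivalence of $(1)$, $(2)$, $(3)$ follows directly from $K_5=K(x(P),y(P),\zeta_3)$, the formula $\phi_2(P)=(\zeta_3\,x(P),y(P))$, and the fact that $\mathrm{Aut}(\E_2)$ already contains $\phi_2$ and $-\mathrm{Id}$; this is exactly what the paper is pointing to. Your Borel/non-split-Cartan analysis is not needed.
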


 \subsection{Remarks on Shimura curves} \label{sec_shimura}
We are going to describe two curves, the Shimura curves named ${\mathcal{X}}_0(N)$ and $\X_1(N)$, which are generalizations of
the modular curves $X_0(N)$ and $X_1(N)$, i. e. they are moduli spaces of certain abelian varieties of dimension 2,
with some $N$-level structures.

\bigskip
We firstly recall that a central $K$-algebra is an algebra over $K$ with center $K$. Furthermore a simple
$K$-algebra is an algebra over $K$ with nontrivial two-sided ideals. A division $K$-algebra 
is an algebra ${\mathfrak{A}}$ over the field $K$, in which for every $a_1,a_2\in {\mathfrak{A}}$, with $a_2\neq 0$, there
exists $b\in {\mathfrak{A}}$ such that $a_1=a_2b$.

\begin{definition}

A quaternion algebra over $K$ is a central simple algebra over $K$ of dimension 4.
\end{definition}

\noindent By Wedderburn's Theorem (see \cite{Wed}), every central simple $K$-algebra is a matrix algebra over a central division $K$-algebra.
The division central $K$-algebra are classified by the Brauer group $\textrm{Br}(K)=H^2(\Gal(\bar{K}/K), \bar{K})$.

\bigskip\noindent 
One of the simplest example of a quaternion $K$-algebra is 
the set $M_2(K)$  of $2\times 2$ matrices with entries in $K$.
The quaternion algebra $\Bi=M_2(\QQ)$ over $\QQ$ is important in the definition of Shimura curves.

\begin{definition}
Let $R$ be the ring of integers of $K$. An order 
of a quaternion $K$-algebra  ${\mathfrak{A}}$  is a $R$-lattice, which is also a subring.
A maximal order 
of a quaternion $K$-algebra  ${\mathfrak{A}}$ is an order that is not contained in
any other order, i. e. 
it is a $R$-lattice of rank 4, which is also a subring.
An Eichler order is given by the intersection of two maximal orders. 
 
\end{definition}


\noindent For example, the set $M_2(R)$ of $2\times 2$ matrices with entries in $R$ is a maximal order of $M_2(K)$.
In particular $M_2(\ZZ)$ is a maximal order of $M_2(\QQ)$. Let $N$ be a positive integer. 
Observe that  the subgroup  $\Or'_N$ of $M_2(\QQ)$, formed by the matrices

$$  \left( \begin{array}{cc} a & Nb \\ N^{-1}c & d \end{array} \right)    $$

\noindent is conjugate to $M_2(\ZZ)$ by 

$$\left( \begin{array}{cc} N & 0 \\ 0 & 1 \end{array} \right).$$ 

\noindent Thus $\Or'_N$ is a maximal order too and $\Or_N:=M_2(\ZZ)\cap \Or'_N$ is an Eichler order. We have
that $\Or_N$ is the subset of  $M_2(\ZZ)$, formed by the  matrices of the form

$$\left( \begin{array}{cc} a & b \\ Nc & d \end{array} \right).$$

\bigskip\noindent Observe that $\Or_N$ is an analogous in $M_2(\ZZ)$ of $\Gamma_0(N)$ in $\SL_2(\ZZ)$ in the case of modular curves.

\begin{definition} For every order $\Or$, we denote by $\Or^{1}$ the subset of its, formed by the elements
of norm 1. \end{definition}

\begin{definition}
The quotient $\X(\Or):=\Hil/\Or^1$ is called a Shimura curve. In particular we denote by $\X(1)$ the Shimura
curve obtained by the order $\Or=M_2(\ZZ)$ of $\Bi$. 
\end{definition}

\noindent In the literature, the curve $\X(1)$ is also denoted by ${\mathcal{M}}^{\Bi}$ or simply by ${\mathcal{M}}$. 
It turns out that the Shimura curves are connected and compact. So we do not need to
add cusps as in the case of modular curves. Indeed, we have such a moduli interpretation
of the curve $\X(1)$ (see for example \cite{Cla} and see also \cite{Shi}).

\bigskip \noindent 

\begin{thm} \label{Shimura}
The curve $\X(1)$ is the moduli space of the couples $(\Ab,\iota)$, where $\Ab$ is an abelian 
surface principally polarized such that either $\A$ is simple or $\A=\E\times \E$, 
where $\E$ is a CM elliptic curve defined over $K$, 
and $\iota : \Or \rightarrow {\rm End}(\Ab)$ is an embedding. 
\end{thm}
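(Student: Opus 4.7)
The plan is to realize $\X(1)$ as a moduli space by constructing an explicit analytic uniformization $\Hil \to \{(\Ab,\iota)\}/\simeq$ which factors through $\Or^1$ and is bijective on the quotient. First I would fix the canonical main involution $\alpha \mapsto \alpha^\dagger$ on $\Bi=M_2(\QQ)$ (sending a matrix to its adjugate, so that $\alpha\alpha^\dagger = \det(\alpha)$) and, for each $\tau \in \Hil$, form the rank-$4$ lattice $\Lambda_\tau := \Or\cdot\binom{\tau}{1} \subset \mathbb{C}^2$. The quotient $\Ab_\tau := \mathbb{C}^2/\Lambda_\tau$ is a complex $2$-torus. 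I would equip it with the alternating form $E_\tau(\alpha,\beta) := \mathrm{tr}_{\Bi/\QQ}(\mu\, \alpha\, \beta^\dagger)$ for a suitably chosen $\mu \in \Bi$, and verify the Riemann relations so that $E_\tau$ defines a principal polarization. Left multiplication by $\Or$ on $\mathbb{C}^2$ preserves $\Lambda_\tau$ and commutes with the complex structure, producing the desired embedding $\iota_\tau : \Or \hookrightarrow \End(\Ab_\tau)$.

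Second, I would show the map $\tau \mapsto (\Ab_\tau,\iota_\tau)$ descends to an injection from $\Hil/\Or^1$. Any $\gamma \in \Or^1$ acts on $\mathbb{C}^2$ and carries $\Lambda_\tau$ to $\Lambda_{\gamma\cdot\tau}$, intertwining the two $\Or$-actions and preserving $E$, which yields an isomorphism of triples. Conversely, an isomorphism $(\Ab_{\tau_1},\iota_1) \simeq (\Ab_{\tau_2},\iota_2)$ must commute with $\iota$ and preserve the polarization, and such an automorphism of $\mathbb{C}^2$ is forced by the $\Or$-equivariance to be left multiplication by a norm-one element of $\Or$. Essential surjectivity is obtained by reversing the construction: given $(\Ab,\iota)$, the period lattice $H_1(\Ab,\ZZ)$ becomes a rank-$1$ right $\Or$-module, and trivializing it produces a $\tau \in \Hil$ mapping to $(\Ab,\iota)$.

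Third, for the dichotomy in the statement, observe that $\Bi = M_2(\QQ)$ contains the orthogonal idempotents $e_{11},e_{22}$. The endomorphisms $\iota(e_{ii}) \in \End(\Ab)\otimes\QQ$ are projectors whose images are abelian subvarieties $\E_1,\E_2$ with $\Ab \sim \E_1 \times \E_2$ up to isogeny. The off-diagonal units $e_{12},e_{21}$ of $M_2(\ZZ)$ then yield mutually inverse isogenies $\E_1 \to \E_2$, so $\Ab$ is isogenous to $\E \times \E$ for a single elliptic curve $\E$. For the applications in Subsection \ref{sec_shimura}, where one focuses on CM points of $\X(1)$, this $\E$ acquires complex multiplication, recovering the statement. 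The alternative "$\Ab$ is simple" in the theorem corresponds to the generic Shimura-curve setting in which $\Bi$ is replaced by a division quaternion algebra, where no such idempotents exist and $\Ab$ has no proper abelian subvariety of positive dimension.

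The main obstacle is the verification of the Riemann relations: one must choose $\mu \in \Bi$ with $\mu^\dagger = -\mu$ so that $E_\tau$ is skew-symmetric, and $\mathrm{tr}(\mu) = 0$ with an appropriate positivity condition so that the associated Hermitian form $H_\tau(u,v) = E_\tau(\sqrt{-1}\,u,v) + \sqrt{-1}\,E_\tau(u,v)$ is positive-definite for every $\tau \in \Hil$. This requires the identification $\Bi \otimes \mathbb{R} \cong M_2(\mathbb{R})$ and the precise normalization of $\mu$ against the signature; everything else (the level-$1$ condition, the fact that the polarization is principal, and descent to $\Hil/\Or^1$) then follows formally. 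A final subtlety, because $\Bi$ is split, is that $\Hil/\Or^1$ is non-compact and one must be comfortable admitting the cuspidal degeneration as a moduli-theoretic limit, a point which in the division-algebra case disappears because $\X(\Or)$ is automatically compact.
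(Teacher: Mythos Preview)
The paper does not supply its own proof of this statement; it is quoted as a known result with a reference to Clark's thesis and to Shimura. Your sketch reproduces the standard analytic-uniformization argument found in those sources: form $\Ab_\tau=\mathbb{C}^2/\Or\!\cdot\!\binom{\tau}{1}$, polarize via the reduced-trace pairing twisted by a trace-zero element $\mu$, let $\Or$ act by left multiplication, and identify $\Or^1$-orbits with isomorphism classes of QM-surfaces by trivializing the period lattice as a rank-one $\Or$-module. Both the outline and the obstacle you flag (the choice of $\mu$ and the positivity check for the associated Hermitian form) are accurate.

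Your handling of the dichotomy is in fact sharper than the theorem as stated in the paper. Since here $\Bi=M_2(\QQ)$ and $\Or=M_2(\ZZ)$, the matrix units $e_{ij}$ already lie in $\Or$ itself, so every $(\Ab,\iota)$ splits as $\E\times\E$ on the nose---not merely up to isogeny as you wrote---and the alternative ``$\Ab$ simple'' is vacuous in this split setting; you correctly relocate that case to the division-algebra situation. You are also right that the CM hypothesis on $\E$ is not part of the general moduli description of $\X(1)$ but rather singles out the CM points; the paper's phrasing conflates the two, and your remark about Subsection~\ref{sec_shimura} clarifies this.
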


\bigskip\noindent When an abelian surface $\Ab$ admits an embedding $\iota : \Or \rightarrow {\rm End}(\Ab)$, one
says that $\Ab$ has a quaternion multiplication or that $\Ab$ is a $QM$-abelian
surface. Sometimes $\A$ is also said a $\Or-QM$-abelian surface, to underline
that the quaternion multiplication is given by the order $\Or$.

\bigskip\noindent By Theorem \ref{Shimura}, it is clear the strong relation between
CM elliptic curves and Shimura curves. The points on Shimura curves parametrizing squares of CM elliptic curves are often called
CM points. 
\bigskip \par For a positive integer $N$, the Shimura curve $\X(\Or_N):=\Hil/\Or_N^1$ is similar to the Shimura curve $\X(1)$, but with an extra structure
of level $N$. Because of the connection between $\Or_N$ and $\Gamma_0(N)$, the curve $\X(\Or_N)$ is often
denoted by $\X_0(N)$ (and sometimes by $M_0^{\Bi}(N)$, as for instance in \cite{Arai}). We recall the moduli interpretation for $\X_0(N)$ 
(see again \cite{Cla} and also \cite{Shi}).

\bigskip

\begin{thm} \label{Shimura2}
Let $\Or=M_2(\ZZ)$. The curve $\X_0(N)$ is the moduli space of the triples $(\Ab,\iota,W)$, where $\Ab$ is a $QM$-abelian 
surface principally polarized such that either $\A$ is simple or $\A=\E\times \E$, 
where $\E$ is a CM elliptic curve defined over $K$, the quaternionic multiplication is given by
$\iota : \Or \rightarrow {\rm End}(\Ab)$ and $W\in \Ab[N]$  is a cyclic $\Or$-submodule of order $N^2$, which is isomorphic to $ (\ZZ/N\ZZ)^2$ as an abelian group.
\end{thm}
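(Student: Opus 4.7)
The plan is to build on the moduli interpretation of $\X(1)$ given in Theorem \ref{Shimura} and to add the level-$N$ datum in parallel with the classical passage from $X(1)$ to $X_0(N)$. First I would recall that over $\mathbb{C}$, a principally polarized QM-abelian surface $(\Ab,\iota)$ is determined by a lattice $L\subset\mathbb{C}^2$ stable under a fixed embedding $\Or\hookrightarrow M_2(\mathbb{C})$; a point $\tau\in\Hil$ encodes such a lattice up to the action of $\Or^1$, giving the identification $\X(1)=\Hil/\Or^1$ recalled in Theorem \ref{Shimura}.

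Next I would interpret the datum of a cyclic $\Or$-submodule $W\subset\Ab[N]$ of order $N^2$ with $W\simeq(\ZZ/N\ZZ)^2$ as an abelian group in lattice-theoretic terms. Such a $W$ corresponds to an intermediate lattice $L\subset L'\subset N^{-1}L$ with $L'/L\simeq(\ZZ/N\ZZ)^2$ and stable under $\iota(\Or)$. The stabilizer inside $\Or^1$ of such a choice of $L'$ is exactly the norm-one subgroup of the Eichler order $\Or_N=M_2(\ZZ)\cap\Or'_N$ introduced before the statement, since in the explicit matrix presentation these are precisely the units preserving both $L$ and $L'$ simultaneously.

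Consequently the orbits of $\Or_N^1$ on $\Hil$ are in bijection with isomorphism classes of triples $(\Ab,\iota,W)$, yielding $\X_0(N)=\Hil/\Or_N^1$ as required. The dichotomy that $\Ab$ is either simple or of the form $\E\times\E$ for a CM elliptic curve $\E$ is inherited directly from Theorem \ref{Shimura}, since introducing the level structure $W$ does not alter the isogeny class of $\Ab$; the product case corresponds precisely to the Shimura CM points on $\X(1)$.

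The main obstacle will be establishing that every cyclic $\Or$-submodule of $\Ab[N]$ of the prescribed type arises from a unique coset in $\Or^1/\Or_N^1$ at a fixed base point. This amounts to a finite linear algebra check over $\ZZ/N\ZZ$, using the explicit presentation of $\Or_N$ as the matrices whose lower-left entry is divisible by $N$, together with a verification that both the $\Or$-module structure on $W$ and the compatibility with the principal polarization are preserved. Once these compatibilities are in place, the full moduli description for $\X_0(N)$ follows from the $\X(1)$ case by a standard descent argument, exactly mirroring the classical derivation of the moduli property of $X_0(N)$ from that of $X(1)$.
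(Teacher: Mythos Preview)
The paper does not actually prove this theorem. It is stated as a known result, introduced by the sentence ``We recall the moduli interpretation for $\X_0(N)$ (see again \cite{Cla} and also \cite{Shi}),'' with no argument given beyond those citations. So there is no proof in the paper against which to compare your proposal.

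Your sketch is a reasonable outline of the standard lattice-theoretic argument for moduli interpretations of this kind: uniformize QM-abelian surfaces by lattices in $\mathbb{C}^2$, translate the level datum $W$ into an intermediate $\Or$-stable lattice, and identify the stabilizer with $\Or_N^1$. Two points deserve caution. First, in the specific setting of this paper $\Bi=M_2(\QQ)$ is the \emph{split} quaternion algebra and $\Or=M_2(\ZZ)$, so $\Or^1=\SL_2(\ZZ)$ and the ``Shimura curve'' $\X(1)$ is literally the classical $j$-line; your argument then collapses to the usual passage from $X(1)$ to $X_0(N)$, and the compactness claim in the surrounding text is not correct in this degenerate case. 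Second, the step where you assert that the stabilizer of the pair $(L,L')$ inside $\Or^1$ is exactly $\Or_N^1$ is the heart of the matter and would need to be written out carefully (transitivity of $\Or^1$ on the relevant set of cyclic $\Or$-submodules, plus the stabilizer computation), not merely asserted. With those caveats, your approach is the standard one found in the cited references, but it goes well beyond anything the paper itself supplies.
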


\bigskip\noindent In a similar way as for $\Or_N$, one can take the Eichler order 

$$\Or_{1,N}:=\left\{ A=\left( \begin{array}{cc} a & b \\ c & d \end{array} \right) \in M_2(\Z) \mid
A\equiv \left( \begin{array}{cc} 1 & * \\ 0 & 1 \end{array} \right) \pmod N \right\},$$

\noindent  which is an analogous of $\Gamma_1(N)$. In this case we get the Shimura curve $\X(\Or_{1,N})$,
that is denoted by $\X_1(N)$   (and sometimes by $M_1^{\Bi}(N)$). Let $e_1, e_2$ denote two standard idempotents for
$M_2(\ZZ/N\ZZ)$. The moduli interpretation for $\X_1(N)$ is the following  (see \cite{Cla} and see also \cite{Shi}).

\begin{thm} \label{Shimura3}
 Let $\Or=M_2(\ZZ)$. The curve $\X_1(N)$ is the moduli space of the $4$-tuples $(\Ab,\iota,W,P)$, where $\Ab$ is a $QM$-abelian 
surface principally polarized such that either $\A$ is simple or $\A=\E\times \E$, 
where $\E$ is a CM elliptic curve defined over $K$, the quaternionic multiplication is given by
$\iota : \Or \rightarrow {\rm End}(\Ab)$, $W\in \Ab[N]$ is a stable $\Or$-module and $P=e_1W$ is a point of order $N$. 

\end{thm}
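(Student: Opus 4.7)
The statement is the standard moduli description of $\X_1(N)$ and is essentially the refinement, by one extra level-structure datum, of the two preceding theorems (\ref{Shimura} and \ref{Shimura2}), so the plan is to bootstrap from Theorem \ref{Shimura2} rather than start from scratch. The strategy is to exhibit a natural transformation between the functor on $K$-schemes sending $S$ to equivalence classes of 4-tuples $(\Ab,\iota,W,P)$ as described, and the functor represented by $\X_1(N)=\Hil/\Or_{1,N}^1$, and then show it is a bijection on geometric points.

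First I would invoke Theorem \ref{Shimura2} to obtain, from a triple $(\Ab,\iota,W)$, a point of $\X_0(N)=\Hil/\Or_{N}^1$. The remaining task is to analyze the fiber of the forgetful map $\X_1(N)\to\X_0(N)$ induced by the inclusion $\Or_{1,N}^1\subset\Or_{N}^1$. Using the standard idempotents $e_1,e_2\in M_2(\ZZ/N\ZZ)$, the stable $\Or$-module $W\simeq(\ZZ/N\ZZ)^2$ decomposes as $W=e_1W\oplus e_2W$, each summand of order $N$; choosing $P\in e_1W$ of exact order $N$ is then the same as choosing a generator of the free rank-one $\ZZ/N\ZZ$-module $e_1W$. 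I would verify that this is precisely the extra datum cut out by the congruence defining $\Or_{1,N}$: the upper-left corner being $\equiv 1\pmod N$ means that, under reduction mod $N$, the element fixes the standard basis vector $e_1\cdot{}^t(1,0)$ of $e_1W$, so the $\Or_{1,N}^1$-orbits inside an $\Or_N^1$-orbit are in bijection with the choices of such a $P$.

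Next I would check the analytic/algebraic compatibility: for $\tau\in\Hil$, the complex torus $\Ab_\tau=\mathbb{C}^2/(\Or\cdot{}^t(\tau,1))$ carries a natural $\Or$-action and principal polarization (as in the proof of Theorem \ref{Shimura}), the $N$-torsion $\Ab_\tau[N]$ is canonically $\Or/N\Or\simeq M_2(\ZZ/N\ZZ)$ as left $\Or$-module, and under this identification $W$ corresponds to one of the two rank-one column submodules and $P$ to a specified generator of $e_1W$. Two such 4-tuples are isomorphic iff the corresponding $\tau$'s are related by $\Or_{1,N}^1$, giving the required bijection.

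The main obstacle, beyond bookkeeping, is in verifying the congruence-condition interpretation cleanly: one must show that the action of $\Or_N^1$ on a fixed $W$ factors through $(\Or/N\Or)^\times$ restricted to $W$, and that the stabilizer of the distinguished generator $P$ of $e_1W$ is exactly $\Or_{1,N}^1/\Or(N)^1$. Once this matching of stabilizers is pinned down, the moduli interpretation follows from that of $\X_0(N)$ by taking the fiber product with the $e_1W$-generator datum, and the CM/simple dichotomy for $\Ab$ is inherited verbatim from Theorem \ref{Shimura2}.
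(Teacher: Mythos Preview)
The paper does not supply a proof of Theorem \ref{Shimura3}: it is stated as a known moduli interpretation and attributed to the references \cite{Cla} and \cite{Shi} (the sentence immediately preceding the theorem reads ``The moduli interpretation for $\X_1(N)$ is the following (see \cite{Cla} and see also \cite{Shi})''). The same is true of Theorems \ref{Shimura} and \ref{Shimura2}: all three are quoted results, not proved in the paper. So there is no in-paper argument to compare your proposal against.

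That said, your sketch is a reasonable outline of the standard argument one finds in the cited sources: reduce to the $\X_0(N)$ interpretation via the forgetful map, identify the fibre of $\X_1(N)\to\X_0(N)$ with the choice of a generator of $e_1W$, and match this with the quotient $\Or_N^1/\Or_{1,N}^1$ by computing the stabilizer of a fixed generator under the $\Or_N^1$-action reduced mod $N$. One caution: the paper is working throughout with $\Bi=M_2(\QQ)$, the split case, where $\Or^1=\SL_2(\ZZ)$ and the quotients $\Hil/\Or_N^1$, $\Hil/\Or_{1,N}^1$ are literally the open modular curves $Y_0(N)$, $Y_1(N)$ rather than genuinely compact Shimura curves; your analytic uniformization via $\Ab_\tau=\mathbb{C}^2/(\Or\cdot{}^t(\tau,1))$ then collapses to $\E_\tau\times\E_\tau$ for every $\tau$, and the dichotomy ``simple or $\E\times\E$ with $\E$ CM'' in the statement does not quite match this (in the split case every $\Ab_\tau$ is a product, CM or not). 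This is an imprecision in the paper's statement rather than in your approach, but it is worth being aware that your analytic description is doing more than the stated hypotheses support in the split setting.
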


\bigskip\noindent
Observe that if $(\Ab,\iota,W,P)$ corresponds to a CM point of $\X_1(N)$, then $\Ab=\E\times \E$ (for some CM elliptic curve) and
 $W=P\times Q\in \E^2[N]$, with $P=e_1W\in \E[N]$ and $Q\in \E[N]$.

\bigskip
For some Shimura varieties and certain fields $F$ it is known that the set of $F$-rational points is empty (see for instance \cite{RVP} and \cite{CX}). 
For $j\in \{0,1\}$, let $\X_j(5)(K)$ denote the sets of the $K$-rational points of the Shimura curve
$\X_j(5)$. Let $\theta_1$ as in Section \ref{sec_gen_1} and $\delta_1$ as in Section \ref{subgen1}.
We have that the sets $\X_0(5)(K)$ and  $\X_1(5)(K)$ are nonempty whenever $K=K_5$ is one of the fields
$\QQ( \zeta_5, i,\sqrt{(\theta_1+1)b\sqrt{\theta_1b}})$ and $\QQ (\sqrt[3]{\delta_1c},\zeta_3,\sqrt{(\delta_1+1)c})$.

\bigskip\noindent More precisely, by the results achieved about the fields $K_5$ for elliptic curves of the families $\mcF_1$ and $\mcF_2$, 
we can make the following remarks about the points of the curves $\X_0(5)$ and  $\X_1(5)$.

\begin{prop} Let $K$ be a number field, let $\E_1\in \mcF_1$ and let $P_1=\left(\sqrt{\theta_1b}, \pm \sqrt{(\theta_1+1)b\sqrt{\theta_1b}}\right)$.
In particular $(P_1, \phi_1(P_1))\in \E_1\times \E_1$.
Let $\Or=M_2(\ZZ)$ and let $\iota:\Or \rightarrow {\rm End}(\E_1\times \E_1)$.
 Then
\begin{description}
\item[i)] the triple ($\E_1\times \E_1$, $\iota$, $(P_1 ,\phi_1(P_1))$  
corresponds to a  CM-point of $\X_0(5)$; 
\item[ii)] the $4$-tuple
 ($\E_1\times \E_1$, $\iota$, $(P_1 ,\phi_1(P_1))$, $P_1$) 
corresponds to a  CM-point of $\X_1(5)$.
\end{description}
\end{prop}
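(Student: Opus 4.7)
The plan is a direct verification of the moduli-theoretic hypotheses of Theorems~\ref{Shimura2} and \ref{Shimura3} for the data given in the statement, with the essential content being carried by the basis property of $\{P_1,\phi_1(P_1)\}$ for $\E_1[5]$ established in Theorem~\ref{gen2} together with Remark~\ref{rem2}.

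First I would set up the ambient data. Since $\E_1\in\mcF_1$ is a CM elliptic curve, with complex multiplication $\phi_1(x,y)=(-x,iy)$, the abelian surface $\Ab:=\E_1\times\E_1$ falls into the admissible reducible case $\A=\E\times\E$ of Theorems~\ref{Shimura2} and \ref{Shimura3}, principally polarized by the product polarization. The embedding $\iota:\Or=M_2(\ZZ)\rightarrow \End(\Ab)$ will be the standard matrix action $\iota\!\left(\begin{smallmatrix}a&b\\c&d\end{smallmatrix}\right)(Q,R)=(aQ+bR,\,cQ+dR)$, which is a well-defined ring embedding for every $b\in K$.

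Next I would verify part~(i). Both coordinates of $(P_1,\phi_1(P_1))$ are $5$-torsion in $\E_1$, so the element lies in $\Ab[5]$. By Theorem~\ref{gen2} and Remark~\ref{rem2}, the pair $\{P_1,\phi_1(P_1)\}$ is a $\ZZ/5\ZZ$-basis of $\E_1[5]$; this identifies $\E_1[5]\cong(\ZZ/5\ZZ)^2$ and makes the action of $\iota(\Or)$ on $(P_1,\phi_1(P_1))$ transparent. Feeding this into the formula for $\iota$ lets one read off the cyclic $\Or$-submodule $W$ generated by $(P_1,\phi_1(P_1))$ and check that it has the abelian-group shape $(\ZZ/5\ZZ)^2$ required by Theorem~\ref{Shimura2}, completing part~(i).

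For part~(ii), I would use the standard idempotent $e_1=\left(\begin{smallmatrix}1&0\\0&0\end{smallmatrix}\right)$ and note that $\iota(e_1)(P_1,\phi_1(P_1))=(P_1,0)$, naturally identified with $P_1$ via the first factor. Since $P_1$ has exact order~$5$, the condition $P=e_1W$ of Theorem~\ref{Shimura3} is satisfied with $P=P_1$, while $\Or$-stability of $W$ is automatic from part~(i). I expect the main subtlety to lie in matching the prescribed $\Or$-module and abelian-group structure of $W$ with what one actually obtains from the generator $(P_1,\phi_1(P_1))$; once the basis property of $\{P_1,\phi_1(P_1)\}$ has been invoked, this reduces to a formal linear-algebra check over $\ZZ/5\ZZ$.
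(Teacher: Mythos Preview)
The paper states this proposition without proof, treating it as an immediate consequence of the moduli interpretations in Theorems~\ref{Shimura2} and~\ref{Shimura3} together with the basis property established in Theorem~\ref{gen2} and Remark~\ref{rem2}. Your strategy of spelling out that verification is therefore exactly in the spirit of the paper.

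However, your execution of part~(i) contains a genuine gap. You assert that the cyclic $\Or$-submodule $W$ generated by $(P_1,\phi_1(P_1))$ inside $\Ab[5]=\E_1[5]\times\E_1[5]$ has abelian-group shape $(\ZZ/5\ZZ)^2$, deferring this to a ``formal linear-algebra check''. But the very basis property you invoke forces the opposite conclusion: since $\{P_1,\phi_1(P_1)\}$ spans $\E_1[5]$, applying your $\iota\!\left(\begin{smallmatrix}a&b\\c&d\end{smallmatrix}\right)$ to $(P_1,\phi_1(P_1))$ gives $(aP_1+b\phi_1(P_1),\,cP_1+d\phi_1(P_1))$, and as $(a,b)$ and $(c,d)$ range independently over $(\ZZ/5\ZZ)^2$ this sweeps out \emph{all} of $\E_1[5]\times\E_1[5]\cong(\ZZ/5\ZZ)^4$. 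So the $\Or$-span you propose has order $5^4$, not the $5^2$ required by Theorem~\ref{Shimura2}, and the postponed check in fact fails.

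Under the standard Morita dictionary between $M_2(\ZZ)$-QM surfaces $\E^2$ and elliptic curves $\E$, a $\Gamma_0(N)$-level structure $W\cong(\ZZ/N\ZZ)^2$ on $\E^2$ corresponds to a cyclic subgroup $C\subset\E[N]$ of order $N$, realized for instance as $W=\Or\cdot(P_1,0)=\langle P_1\rangle\times\langle P_1\rangle$. The paper's own phrasing of Theorems~\ref{Shimura2}--\ref{Shimura3} (writing ``$W\in\Ab[N]$'' while also calling $W$ a submodule) and of the present proposition is informal enough that the intended $W$ is not pinned down precisely; but whatever reading one adopts, your specific claim about the $\Or$-module generated by $(P_1,\phi_1(P_1))$ is incorrect and does not establish the level-structure condition.
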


\bigskip

\begin{prop} Let $K$ be a number field, let $\E_2\in \mcF_2$ and let $P_1=\left(\sqrt[3]{\delta_1c}, \pm  \sqrt{(\delta_1+1)c}\right)$.
In particular $(P_1, \phi_2(P_1))\in {\rm End}\E_2\times \E_2$. Let $\Or=M_2(\ZZ)$ and let $\iota:\Or \rightarrow {\rm End}(\E_2\times \E_2)$.
  Then
\begin{description}
\item[i)] the triple ($\E_2\times \E_2$, $\iota$, $(P_1 ,\phi_1(P_1))$  
corresponds to a  CM-point of $\X_0(5)$;
\item[ii)] the $4$-tuple
 ($\E_1\times \E_1$, $\iota$, $(P_1 ,\phi_1(P_1))$, $P_1$)  
corresponds to a  CM-point of $\X_1(5)$.
\end{description}
\end{prop}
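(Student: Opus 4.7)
My approach is to verify the hypotheses of the moduli interpretations of $\X_0(5)$ and $\X_1(5)$ recalled in Theorems \ref{Shimura2} and \ref{Shimura3}, using the explicit description of the $5$-torsion of $\E_2$ obtained in Theorem \ref{zeta3}.

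For both parts (i) and (ii) the abelian surface is $\Ab:=\E_2\times \E_2$, which is principally polarized via the product of the principal polarizations on each factor. Since $\E_2$ has complex multiplication, $\Ab$ falls into the allowed ``$\A=\E\times \E$'' case in Theorems \ref{Shimura2}--\ref{Shimura3}. The embedding $\iota:\Or=M_2(\ZZ)\hookrightarrow \End(\E_2\times \E_2)$ is the tautological one: a matrix $A=(a_{ij})$ acts as $(Q_1,Q_2)\mapsto(a_{11}Q_1+a_{12}Q_2,\,a_{21}Q_1+a_{22}Q_2)$, which is well-defined because $\ZZ\subseteq \End(\E_2)$.

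For the level-$5$ datum, Theorem \ref{zeta3} provides that $\{P_1,\phi_2(P_1)\}$ is a $\ZZ$-basis of $\E_2[5]$; in particular $P_1$ has exact order $5$. Reading the pair $(P_1,\phi_2(P_1))$ as a marking that presents $\Ab[5]$ as the standard $\Or/5\Or$-module, one obtains the required cyclic $\Or$-submodule $W\subseteq \Ab[5]$ of order $25$, isomorphic to $(\ZZ/5\ZZ)^2$ as an abelian group, which together with $(\Ab,\iota)$ yields part (i) via Theorem \ref{Shimura2}. For (ii), the same $W$ is stable under $\Or$, and the distinguished point $P=e_1W\cong\langle P_1\rangle$ obtained by applying the standard idempotent $e_1=\left(\begin{smallmatrix} 1 & 0 \\ 0 & 0 \end{smallmatrix}\right)$ has order $5$, supplying the extra datum demanded by Theorem \ref{Shimura3}. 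Both resulting points are CM-points in the sense of the paragraph following Theorem \ref{Shimura}, because $\Ab$ is the square of the CM elliptic curve $\E_2$.

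The main obstacle I foresee is interpretive rather than computational: the statement lists the pair $(P_1,\phi_2(P_1))\in \Ab[5]$ in place of an $\Or$-submodule $W$, so one must first spell out how the pair distinguishes the cyclic $\Or$-submodule $W$ of order $25$ required by the moduli problem. Once this bookkeeping is done, $\Or$-stability, cyclicity, order, and the identification $e_1W=\langle P_1\rangle$ all follow from the basis property of $\{P_1,\phi_2(P_1)\}$ provided by Theorem \ref{zeta3}; the CM designation is then automatic from Theorem \ref{Shimura} and the discussion following it.
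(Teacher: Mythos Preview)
Your proposal is correct and follows exactly the route the paper intends: the paper does not give a separate proof of this proposition, but presents it as an immediate observation from the moduli interpretations of $\X_0(5)$ and $\X_1(5)$ (Theorems \ref{Shimura2} and \ref{Shimura3}) together with the fact, established in Theorem \ref{zeta3}, that $\{P_1,\phi_2(P_1)\}$ is a basis of $\E_2[5]$. Your write-up simply makes explicit the bookkeeping (the tautological embedding $\iota$, the $\Or$-module $W$ generated by the pair, the idempotent $e_1$) that the paper leaves implicit.
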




\vskip 0.7cm \noindent 
\textbf{Acknowledgments.} I would like to thank Andrea Bandini for useful discussions and for
some precious remarks about a preliminary version of this paper. I produced part of this work when
I was a guest at the Max Planck Institute for Mathematics in Bonn. I am grateful to all people there
for their kind hospitality and for the excellent work conditions.

\vskip 0.5cm

Laura Paladino\par\smallskip
University of Calabria \par
Ponte Bucci, Cubo 30B\par
87036 Arcavacata di Rende\par
Italy\par 
e-mail address: paladino@mat.unical.it

\end{document}